\def\thispapertitle {Hurwitz-type matrices of doubly infinite series}
\definecolor{mybrown}{HTML}{D02000}
\newcommand{\navy}{}%\color{Navy}}
\newcommand{\gree}{}%\color[HTML]{006500}}
\newcommand{\sign}{\operatorname{sign}}
\renewcommand{\le}{\leqslant}
\renewcommand{\ge}{\geqslant}
\newcommand{\ww}{\quad\text{where}\quad}
\newcommand{\an}{\quad\text{and}\quad}
\renewcommand{\Im}{\operatorname{Im}}
\newtheorem{theorem}{\usekomafont{subparagraph}Theorem}
\newtheorem{lemma}[theorem]{\usekomafont{subparagraph}Lemma}
\newtheorem{corollary}[theorem]{\usekomafont{subparagraph}Corollary}
\theoremstyle{definition}
\newtheorem{fact}{\usekomafont{subparagraph}Fact}
\newtheorem{remark}[theorem]{\usekomafont{subparagraph}Remark}
\newtheorem*{remark*}{\usekomafont{subparagraph}Remark}
\newtheorem*{definition}{\usekomafont{subparagraph}Definition}
\numberwithin{paragraph}{section}
\def\blfootnote{\gdef\@thefnmark{}\@footnotetext}
\title{\thispapertitle%
    \thanks{This work was supported by the Einstein Foundation Berlin and the European Research
        Council under the European Union's Seventh Framework Programme (FP7/2007--2013)/ERC
        grant agreement no.~259173.} }
\author{\gree\fontfamily{PTSansCaption-TLF}\selectfont\normalsize\vspace{1em}Alexander
    Dyachenko%
    \footnote{\emph{TU-Berlin, Institut f\"ur Mathematik, Sekr.~MA 4-2, Straße des 17. Juni 136,
        10623 Berlin, Germany.\newline        
        \href{mailto:diachenko@sfedu.ru}{diachenko@sfedu.ru},
        \href{mailto:dyachenk@math.tu-berlin.de}{dyachenk@math.tu-berlin.de} }}
}
\date{\gree\fontfamily{PTSansCaption-TLF}\selectfont\vspace{-.4em}{\small\today\vskip -2.5em}}
\renewcommand{\tocbasic@@before@hook}{\vspace{-1.5em}\parskip0pt\itemsep0pt\setstretch{1}}
\newcommand{\Cp}{\ensuremath{\mathbb{C}_+}}
\begin{document}
\automark{section}
\maketitle

\setstretch{1.2}

\begin{abstract}
    This paper show that two doubly infinite series generate a totally nonnegative Hurwitz-type
    matrix if and only if their ratio represents an~$\mathcal S$-functions of a certain kind.
    The doubly infinite case needs a specific approach, since the ratios have no correspondent
    Stieltjes continued fraction. Another forthcoming publication (see
    \href{http://arxiv.org/abs/1608.04440}{Dyachenko, arXiv:1608.04440}) offers a shorter
    improved version of this result as well as its application to the Hurwitz stability.
    Nevertheless, the proof presented here illustrates features of totally nonnegative
    Hurwitz-type matrices better.
\end{abstract}
\vspace{5pt}
\noindent
    \textbf{2010 Mathematics Subject Classification:} 30C15 $\cdot$ 30B10 $\cdot$ 40A05.
    % }
    %     \blfootnote{
        \\[2pt]
        \textbf{Keywords:} Total positivity $\cdot$ P\'olya frequency sequence $\cdot$
        Hurwitz matrix $\cdot$ Doubly infinite series.
    % }
    %\vspace{1em}
\section{Introduction}
This paper offers a weaker version of the main result of the publication~\cite{Dyachenko16b}.
One of its features (and its key difference from~\cite{Dyachenko16b}) is that it tries to make
total nonnegativity the cornerstone. Accordingly, some well-known properties are rederived
directly from estimates of matrix minors, so the proofs turn to be more self-contained.
\begin{definition}
    A doubly (\emph{i.e.} two-way) infinite sequence $\big(f_n\big)_{n=-\infty}^\infty$ is
    called \emph{totally positive} if all minors of the (four-way infinite) Toeplitz matrix
    \begin{equation*}
        % \big(f_{n-j}\big)_{n,j=-\infty}^\infty =
        \begin{pmatrix}
            \ddots & \vdots & \vdots & \vdots & \vdots & \vdots & \iddots\\
            \hdots & f_0 & f_1 & f_2 & f_3 & f_4 & \hdots\\
            \hdots & f_{-1} & f_0 & f_1 & f_2 & f_3 & \hdots\\
            \hdots & f_{-2} & f_{-1} & f_0 & f_1 & f_2 & \hdots\\
            \hdots & f_{-3} & f_{-2} & f_{-1} & f_0 & f_1 & \hdots\\
            \hdots & f_{-4} & f_{-3} & f_{-2} & f_{-1} & f_0 & \hdots\\
            \iddots & \vdots & \vdots & \vdots & \vdots & \vdots & \ddots
        \end{pmatrix}
        \equalscolon T(f),
        \ww
        f(z)\colonequals\sum_{\mathclap{n=-\infty}}^\infty \ f_nz^n
    \end{equation*}
    are nonnegative (\emph{i.e.} the matrix is \emph{totally nonnegative}).
\end{definition}
\begin{theorem}[{Edrei~\cite{Edrei}%
        \footnote{Another proof is given in~{\cite[Section~8]{Karlin}}; an earlier
            publication~\cite{AESW} studies the singly infinite case.}}]
    \label{th:E-AESW}
    Let a non-trivial sequence~$\big(f_n\big)_{n=-\infty}^\infty$ be totally positive.
    Then, unless~$f_n=f_0^{1-n}f_1^{n}$ for every~$n\in\mathbb{Z}$, the series~$f(z)$ converges
    in some annulus to a function with the following % Weierstra\ss\
    representation
    \begin{equation}\label{eq:funct_gen_dtps}
        Cz^je^{Az+\frac{A_0}z}\cdot
        \frac{\prod_{\mu>0} \left(1+\frac{z}{\beta_\mu}\right)}
        {\prod_{\nu>0} \left(1-\frac{z}{\delta_\nu}\right)}\cdot
        \frac{\prod_{\mu<0} \left(1+\frac{z^{-1}}{\beta_\mu}\right)}
        {\prod_{\nu>0} \left(1-\frac{z^{-1}}{\delta_\nu}\right)},
    \end{equation}
    where the products converge absolutely, \(j\) is integer and the coefficients satisfy
    \(A,A_0\ge 0\), ~\(C,\beta_\mu,\delta_\nu>0\) for all~\(\mu,\nu\). The converse is also
    true: every function of this form generates (\emph{i.e.} its Laurent coefficients give) a
    doubly infinite totally positive sequence.
\end{theorem}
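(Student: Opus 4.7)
The proof naturally splits into two directions. The converse---that any function of the form (\ref{eq:funct_gen_dtps}) generates a totally positive bi-infinite sequence---reduces to a convolution argument over the elementary factors. The forward direction is the substantial one; my plan is to reduce the bi-infinite case to two separate applications of the AESW theorem via a Wiener--Hopf-type factorization of $f(z)$ on its annulus of convergence.

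For the converse, I first verify total positivity for each elementary factor in (\ref{eq:funct_gen_dtps}). The factors $Cz^j$, $(1+z/\beta_\mu)$, $(1-z/\delta_\nu)^{-1}$, and $e^{Az}$ are TP either by direct inspection (the Toeplitz matrix of $e^{Az}$, for instance, is the exponential of a nilpotent shift) or by the singly-infinite AESW theorem; the reflected factors in $z^{-1}$ then inherit TP via the symmetry $z\mapsto z^{-1}$, which transposes the Toeplitz matrix and preserves total nonnegativity. Finally, since multiplication of generating functions corresponds to convolution of coefficients, i.e.\ to multiplication of Toeplitz matrices, the Cauchy--Binet formula yields TP for the product, provided the infinite matrix products converge entry-wise; this follows from the absolute convergence of the products in (\ref{eq:funct_gen_dtps}).

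For the forward direction, I proceed in three steps. First, the $2\times 2$ minor inequalities $f_{n-1}f_{n+1}\ge f_n^2$ force $(\log f_n)$ to be convex wherever $f_n>0$, so $f(z)$ converges absolutely in some annulus $r_1<|z|<r_2$; the exclusion of purely geometric $f_n=f_0^{1-n}f_1^n$ ensures $r_1<r_2$. Second, I use total nonnegativity of $T(f)$ to rule out zeros of $f$ on circles $|z|=r\in(r_1,r_2)$, and apply a Wiener--Hopf factorization $f(z)=z^j\cdot f^+(z)\cdot f^-(z)$, where $f^+$ is holomorphic and nonvanishing on $\{|z|<r_2\}$ with $f^+(0)>0$, and $f^-$ is holomorphic and nonvanishing on $\{|z|>r_1\}$ with $f^-(\infty)>0$. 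Third, I show that the Taylor coefficients of $f^+(z)$ and of $f^-(1/z)$ each form a one-sided TP sequence; AESW then provides the two product pieces in (\ref{eq:funct_gen_dtps}), with the exponential growth of $f$ near $0$ and $\infty$ encoded in the factors $e^{A_0/z}$ and $e^{Az}$ respectively.

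The main obstacle is the third step: transferring total positivity from the bi-infinite matrix $T(f)$ to the one-sided factors $f^{\pm}$. This is not a formal consequence of the factorization and will require careful bookkeeping of the minors of $T(f)$ under the splitting $f=f^+\cdot f^-$. One natural route is to approximate $f$ by the truncations $p_N(z)=\sum_{|n|\le N}f_nz^n$, establish (via a suitable finite-matrix analogue of AESW) that the polynomials $z^N p_N(z)$ have only negative real zeros, and use a Hurwitz-type argument inside the annulus to follow these zeros in the limit $N\to\infty$. It is precisely this passage---from bi-infinite matrix inequalities to an analytic factorization with the correct zero/pole structure---where no Stieltjes continued fraction is available, and where, as the paper notes, the doubly infinite case genuinely departs from the singly infinite one.
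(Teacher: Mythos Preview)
The paper does not prove this theorem; it is quoted as a known result of Edrei (with alternative proofs in Karlin and the singly infinite precursor AESW) and used as a tool throughout. So there is no ``paper's own proof'' to compare against, and your task was really to reproduce Edrei's argument.

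On the substance of your sketch: the converse direction is fine, and Step~1 of the forward direction is essentially right (though the inequality goes the other way --- total nonnegativity gives $f_n^2\ge f_{n-1}f_{n+1}$, i.e.\ log-\emph{concavity}, which is what makes the ratio test work). The real problem is Step~3, and you are right to flag it: neither of your two proposed routes closes the gap. For the Wiener--Hopf route, the factorization $f=z^jf^+f^-$ certainly exists once $f$ is nonvanishing on some circle, but nothing in the construction forces $f^+$ or $f^-$ to have nonnegative coefficients, let alone to be one-sided TP; the factorization is determined by $\log f$ and the winding number, not by sign constraints, and proving that the factors are TP is essentially the content of the theorem itself. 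For the truncation route, the claim that $z^Np_N(z)$ has only nonpositive real zeros is false in general: truncating a TP sequence and padding with zeros does \emph{not} yield a TP sequence. Already for $f(z)=e^z$ the partial sum $1+z+z^2/2$ has complex zeros, so the ``finite-matrix analogue of AESW'' you invoke does not apply to these truncations. Edrei's actual argument (and Karlin's) avoids both pitfalls by peeling off poles one at a time via a limiting procedure --- exactly the mechanism the present paper adapts in Lemmata~\ref{lemma:remove_pole} and~\ref{lemma:comm_poles} for the Hurwitz setting, using Whitney's reduction theorem to show that multiplying by $(1-z/R)$ or $(1-r/z)$ preserves total nonnegativity. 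Finally, your closing remark about the absence of a Stieltjes continued fraction conflates two different problems: that obstruction concerns the paper's Hurwitz-matrix theorem for pairs of series, not Edrei's theorem for a single series.
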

In the products and sums with inequalities in limits, we assume that the indexing variable
changes in~$\mathbb{Z}$ or in some finite or infinite subinterval of~$\mathbb{Z}$, and that it
additionally satisfies the indicated inequalities. Accordingly, a product or sum can be empty,
finite or infinite. Note that the indexation of four-way infinite matrices affects the
multiplication. Here we adopt the following convention: the uppermost row and the leftmost
column, which appear in representations of such matrices, have the index~$1$ unless another is
stated explicitly.

The so-called Hurwitz-type matrices have applications to stability theory. They are built from
two Toeplitz matrices; more specifically,
\begin{definition}
    The \emph{Hurwitz-type matrix} is a matrix of the form
    \begin{equation}\label{eq:Hpq_def}
        H(p,q)={\begin{pmatrix}
                \ddots & \vdots & \vdots & \vdots & \vdots & \vdots & \vdots & \iddots\\
                \hdots & a_0& a_1& a_2 & a_3 & a_4 & a_5 & \hdots\\
                \hdots & b_0& b_1& b_2 & b_3 & b_4 & b_5 & \hdots\\
                \hdots & a_{-1} & a_0 & a_1 & a_2 & a_3 & a_4 & \hdots\\
                \hdots & b_{-1} & b_0 & b_1 & b_2 & b_3 & b_4 & \hdots\\
                \hdots & a_{-2} & a_{-1} & a_0 & a_1 & a_2 & a_3 & \hdots\\
                \iddots & \vdots & \vdots & \vdots & \vdots & \vdots & \vdots & \ddots
            \end{pmatrix}},
    \end{equation}
    where~$p(z)=\sum_{k=-\infty}^\infty\,a_kz^k$ and~$q(z)=\sum_{k=-\infty}^\infty\,b_kz^k$ are
    formal power series.
\end{definition}
Recent publications~\cite{HoltzTyaglov,Dyachenko14} have shown that a criterion relevant to
Theorem~\ref{th:E-AESW} holds for the Hurwitz-type matrices. The main goal of the present study
is to give an extension of that criterion: to determine conditions on the power series~$p(z)$
and~$q(z)$ necessary and sufficient for total nonnegativity of the matrix~$H(p,q)$. Like in the
earlier studied cases, one of the conditions is that the ratio~$\frac{q(z)}{p(z)}$ maps the
upper half-plane~$\Cp\coloneqq\{z\in\mathbb C:\Im z>0\}$ into itself. To give a more precise
statement, let us introduce the following class of functions:
\begin{definition}
    A function~$F(z)$ is called an $\mathcal{S}$-function if it is holomorphic and
    satisfies~$\Im z\cdot\Im F(z)\ge 0$ for all~$z\not\le0$ and if additionally~$F(z)\ge0$
    wherever~$z>0$.
\end{definition}
The straightforward corollary of the definition is that~$F(\overline z)=\overline{F(z)}$ for
each~$\mathcal S$-function~$F(z)$ wherever it is regular. We need a subclass of
$\mathcal{S}$-functions introduced in the following lemma.
\begin{lemma}\label{lemma:prop_S1}
    Let~$p(z)$ and~$q(z)$ be two functions of the form~\eqref{eq:funct_gen_dtps}; then their
    ratio~$F(z)=\frac{q(z)}{p(z)}$ is an $\mathcal S$-function if and only if there exists a
    function~$g(z)$ of the form~\eqref{eq:funct_gen_dtps}, such that
    \[
    \begin{gathered}
        \frac{p(z)}{g(z)}= a_0{\prod_{\nu>0} \left(1+\dfrac{z}{\alpha_\nu}\right)}
        {\prod_{\nu<0}\left(1+\dfrac{z^{-1}}{\alpha_\nu}\right)}, \quad \frac{q(z)}{g(z)}=
        b_0{\prod_{\mu>0} \left(1+\dfrac{z}{\beta_\mu}\right)}
        {\prod_{\mu<0}\left(1+\dfrac{z^{-1}}{\beta_\mu}\right)} \an
        \\
        0<\cdots<\alpha_{-2}^{-1}<\beta_{-1}^{-1}<\alpha_{-1}^{-1}
        <\beta_{1}<\alpha_{1}<\beta_{2}<\alpha_2<\cdots;
    \end{gathered}
    \]
    if the sequence of~$\mu$ terminates on the left at~$\mu_0$, then~$\beta_{\mu_0}$ can be
    positive or zero%
    \footnote{When~$\beta_{\mu_0}=0$, the corresponding
        factor~$\Big(1+\frac{z}{\beta_{\mu_0}}\Big)$ needs to be replaced by the factor~$z$.}
    and the sequence of~$\nu$ also terminates on the left at~$\mu_0$.
\end{lemma}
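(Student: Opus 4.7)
The plan is to extract from $p$ and $q$ all their shared structure --- common exponential factors, common positive-pole factors $\delta_\nu$ and common negative zeros --- and to let $g$ absorb them, so that $P \colonequals p/g$ and $Q \colonequals q/g$ become the ``pure $\alpha$-'' and ``pure $\beta$-''products appearing in the conclusion, sharing no zero. The interlacing inequalities will then be read off from the analytic properties of $F = Q/P$ as an $\mathcal{S}$-function on $\mathbb{C}\setminus(-\infty,0]$.

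For the forward implication, my first step would be to force the exponential factors of $p$ and $q$ to coincide. Since every $\mathcal{S}$-function is holomorphic on $\mathbb{C}\setminus(-\infty,0]$, the residual factor $e^{(A_q-A_p)z}$ in $F=q/p$ would produce an essential singularity at $\infty$ unless $A_q=A_p$, and similarly the $A_0/z$ exponents must match to avoid one at the origin. My second step would be to rule out zeros and poles of $F$ on $(0,\infty)$: a local expansion at any $\delta>0$ shows that an odd-order zero or pole is incompatible with $\Im z\cdot\Im F \ge 0$ in $\Cp$, while an even-order one is incompatible with $F(x)\ge 0$ on both sides of $\delta$. Consequently the $\delta_\nu$-products of $p$ and $q$ must coincide as multisets and cancel inside $F$; together with the common negative zeros and a matching $z^j$ factor they all form the common divisor $g$, leaving $P$ and $Q$ in the required form.

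After this cleanup, $F=Q/P$ is meromorphic on $\mathbb{C}\setminus\{0\}$, its zeros are precisely the points $-\beta_\mu$ and $-\beta_\mu^{-1}$, and its poles are the points $-\alpha_\nu$ and $-\alpha_\nu^{-1}$, all sitting on $(-\infty,0)$. The interlacing I would obtain by tracking $\arg F(x+i\varepsilon)$ for small $\varepsilon>0$ as $x$ decreases through the negative real axis: since $\Im F\ge 0$ on $\Cp$, this argument is confined to $[0,\pi]$; but crossing a simple pole of $F$ bumps it by $+\pi$, crossing a simple zero by $-\pi$, and any higher-order singularity would contribute an even larger jump. The only way to stay in $[0,\pi]$ is for zeros and poles to be simple and to strictly alternate, which yields exactly the claimed inequality chain. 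For the converse, I would truncate both doubly infinite products to finite subproducts, invoke the classical Hurwitz/Stieltjes criterion --- a ratio of polynomials with interlacing negative roots is an $\mathcal{S}$-function --- and then pass the $\mathcal{S}$-property to the limit using the absolute convergence of the products together with Montel's theorem.

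The principal obstacle is the doubly infinite, two-sided nature of the data: the essential singularities at $0$ and at $\infty$ intrude at every step. The cancellation of exponentials is genuinely global rather than local, so it requires controlling $F$ across all of $\Cp$; and in the converse direction the truncated subproducts converge only locally uniformly on compacta bounded away from $0$ and $\infty$, so some care is needed to transfer the half-plane inequality and the positivity on $(0,\infty)$ to the doubly infinite limit.
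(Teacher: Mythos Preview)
Your overall plan matches the paper's, but two steps contain genuine gaps.

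First, the exponential-matching argument does not work as stated. You claim that a residual factor $e^{(A_q-A_p)z}$ ``would produce an essential singularity at $\infty$'', implicitly using that an $\mathcal S$-function cannot have one. But whenever the products $\prod_{\mu>0}(1+z/\beta_\mu)$ and $\prod_{\nu>0}(1+z/\alpha_\nu)$ are genuinely infinite, $F$ already has an essential singularity at $\infty$ (and at $0$), so the mere presence of an essential singularity carries no information. What actually singles out the exponential factor is its \emph{growth rate}: the canonical products have minimal type by Theorem~\ref{th:canonical_product}, whereas $e^{\pm Az}$ has positive type. The paper uses this quantitatively through the Carath\'eodory inequality~\eqref{eq:caratheodory_cr}, which forces any self-map of $\mathbb C_+$ to satisfy $|F(\rho e^{i\pi/6})|\le 2c\rho$; combined with the subexponential bound on the products this kills any surviving exponential. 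Your argument needs this growth input (or an equivalent, such as the Nevanlinna integral representation), not just ``essential singularity''.

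Second, you assert that after cancellation $P$ and $Q$ carry ``a matching $z^j$ factor'', but you give no reason why the integer exponents in $p$ and $q$ should agree. A priori $F$ could retain a residual factor $z^{k-j}$; nothing you have said excludes, say, $k-j=1$ (and indeed $F(z)=Cz$ \emph{is} an $\mathcal S$-function, corresponding to the boundary case $\beta_{\mu_0}=0$ in the statement). Your argument-tracking on $(-\infty,0)$ does yield strict alternation of zeros and poles, but it neither fixes the residual power of $z$ nor decides whether $\beta_1<\alpha_1$ or $\alpha_1<\beta_1$ in the chain. The paper treats this as a separate step: after establishing interlacing it rewrites $F$ as $z^m$ times a product of the form~\eqref{eq:order_alphas_betas} and then invokes Lemma~\ref{lemma:prop_S}, which shows that $z^mG(z)$ with $G$ of that form cannot map $\mathbb C_+$ into itself unless $m=0$. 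You need an analogous global argument here.
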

This lemma is an analogue of a theorem due to Kre\u{\i}n, see~\cite[p.~308]{Levin}. In other
words, under its conditions the function~$F(z)$ can be expressed as presented below
in~\eqref{eq:order_alphas_betas} or~\eqref{eq:order_alphas_betas_mer}. The chain inequality
means that zeros of~$\frac{p(z)}{g(z)}$ and~$\frac{q(z)}{g(z)}$ are \emph{interlacing}, that is
all zeros of each of the functions are real and separated by zeros of another.
Lemma~\ref{lemma:prop_S1} provides another reformulation of the item~\eqref{item:m1} in our main
result:
\begin{theorem}\label{th:main1}
    If~$a_0\ne 0$, then the following conditions are equivalent:
    \begin{enumerate}[\upshape(a)]
    \item \label{item:m1}%
        The series~$p(z)=\sum_{k=-\infty}^\infty\,a_kz^k$
        and~$q(z)=\sum_{k=-\infty}^\infty\,b_kz^k$ converge in some common annulus to functions
        of the form~\eqref{eq:funct_gen_dtps} and their ratio~$F(z)=\frac{q(z)}{p(z)}$ is an
        $\mathcal S$-function.
    \item \label{item:m3}%
        The matrix~$H(p,q)$ is totally nonnegative and there exists $k\in\mathbb{Z}$ such
        that~$a_k^2\ne a_{k-1}a_{k+1}$.
    \end{enumerate}
\end{theorem}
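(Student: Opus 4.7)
The plan is to treat the two implications of Theorem~\ref{th:main1} separately. The direction~\eqref{item:m1}$\Rightarrow$\eqref{item:m3} will be carried out by a multiplicative factorization based on Lemma~\ref{lemma:prop_S1}, reducing total nonnegativity of~$H(p,q)$ to that of a few explicit building blocks. The reverse direction~\eqref{item:m3}$\Rightarrow$\eqref{item:m1} is substantially harder and will occupy most of the work, since one cannot appeal to a Stieltjes continued fraction expansion of~$F=q/p$ as in the singly infinite case; instead, the structure of~$p$, $q$, and~$F$ has to be recovered from minor estimates on finite sections of~$H(p,q)$.

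For~\eqref{item:m1}$\Rightarrow$\eqref{item:m3}, I would apply Lemma~\ref{lemma:prop_S1} to write $p = g\cdot p_0$ and $q = g\cdot q_0$, where $g$ is of the form~\eqref{eq:funct_gen_dtps} and the zeros of~$p_0$ and~$q_0$ interlace in the prescribed manner. On the level of Laurent coefficients, this is a convolution identity that translates into a matrix factorization roughly of the form $H(p,q) = H(p_0, q_0)\cdot T(g)$. Since $T(g)$ is totally nonnegative by Theorem~\ref{th:E-AESW}, it suffices to establish the total nonnegativity of~$H(p_0, q_0)$. The chain of inequalities in Lemma~\ref{lemma:prop_S1} then allows a further product decomposition of~$H(p_0, q_0)$ into elementary Hurwitz-type blocks associated with consecutive pairs $\bigl(1 + z/\alpha_\nu,\, 1 + z/\beta_\mu\bigr)$ and one-sided factors near~$z=0$; each such block is a low-rank perturbation of an identity-like matrix whose minors can be evaluated by inspection and are nonnegative precisely because of the interlacing. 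The non-degeneracy $a_k^2 \ne a_{k-1}a_{k+1}$ just excludes the degenerate geometric-sequence case of Theorem~\ref{th:E-AESW}.

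For~\eqref{item:m3}$\Rightarrow$\eqref{item:m1}, I would first observe that selecting only the $a$-rows of~$H(p,q)$ produces a doubly infinite Toeplitz matrix equal to~$T(p)$, and similarly for~$q$; hence $T(p)$ and $T(q)$ are totally nonnegative as submatrices of a totally nonnegative matrix. The non-degeneracy hypothesis rules out the excluded case in Theorem~\ref{th:E-AESW}, so $p$ converges in an annulus to a function of the form~\eqref{eq:funct_gen_dtps}; for~$q$ the same conclusion follows after a small perturbation by a suitable multiple of~$p$, which keeps the Hurwitz-type matrix totally nonnegative. It then remains to prove that $F = q/p$ is an $\mathcal{S}$-function, which by Lemma~\ref{lemma:prop_S1} amounts to an interlacing statement for the zeros of~$p_0$ and~$q_0$ after dividing out a common factor~$g$. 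I would extract this interlacing from nonnegativity of the mixed minors $a_i b_j - a_j b_i$ and their higher-order analogues available in~$H(p,q)$, pass to finite-order truncations of both Laurent series, and invoke continuity of zeros together with a Hurwitz-type theorem on limits of interlacing polynomial roots to obtain the desired property in the limit.

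The main obstacle is precisely this last step: obtaining a global interlacing and $\mathcal{S}$-function statement for~$F$ from nonnegativity of finite minors without recourse to a continued fraction. In the singly infinite analogues of~\cite{HoltzTyaglov, Dyachenko14}, the Stieltjes continued fraction of~$q/p$ simultaneously encodes total nonnegativity of~$H(p,q)$ (through positivity of its partial numerators) and the interlacing property, so the two conditions translate directly into one another. No such bridge is available in the doubly infinite setting, and one is forced to combine careful uniform estimates on minors of truncated matrices with a compactness argument within the class of $\mathcal{S}$-functions. Keeping track of zeros that accumulate separately from the factors indexed by $\mu,\nu > 0$ and by $\mu,\nu < 0$ in~\eqref{eq:funct_gen_dtps}, and matching them to the corresponding factors in the Kre\u{\i}n-type representation of Lemma~\ref{lemma:prop_S1}, is expected to be the most delicate part of the argument.
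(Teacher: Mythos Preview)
Your plan for~\eqref{item:m1}$\Rightarrow$\eqref{item:m3} is close in spirit to the paper's: both use the factorization~$H(p,q)=H(p_0,q_0)\cdot T(g)$ together with Theorem~\ref{th:E-AESW} and Cauchy--Binet. The paper does not attempt your ``elementary block'' decomposition of~$H(p_0,q_0)$, however; instead it approximates~$p_0,q_0$ by the finite products~$p_n,q_n$ of~\eqref{eq:F_is_limit}, invokes the known polynomial result of~\cite{HoltzTyaglov,Dyachenko14} to get~$H(p_n,q_n)$ totally nonnegative, and passes to the limit (Lemma~\ref{lemma:converse}). Your block approach may well work, but it is not what happens here.

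For~\eqref{item:m3}$\Rightarrow$\eqref{item:m1} there is a genuine gap. Your proposal is to truncate the Laurent series, deduce interlacing for the truncated polynomials from minor inequalities, and take a limit of roots. This is precisely the kind of argument the doubly infinite setting resists: truncating a two-sided Laurent series does not produce a pair whose Hurwitz-type matrix is a submatrix of~$H(p,q)$, so total nonnegativity is not inherited, and even in favourable cases the zeros of truncations need not approximate the zeros of the limit well enough to transport interlacing. You flag this as the ``main obstacle'' but do not indicate how to overcome it.

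The paper avoids truncation entirely. Two ingredients you are missing are essential. First, Lemmata~\ref{lemma:ann_convergence}--\ref{lemma:comm_poles} show that~$p$ and~$q$ share the same annulus of convergence, the same poles with the same multiplicities, and the same exponential factor; this allows one to strip off a common~$g(z)$ and reduce to~$p_*,q_*$ holomorphic in~$\mathbb C\setminus\{0\}$. Your ``small perturbation of~$q$ by a multiple of~$p$'' does not achieve this. Second, and this is the crux, the paper observes (Lemma~\ref{lemma:T_via_HH}) that
\[
T(Ap+Bq)=H^{\mathsf T}(A,B)\,H(p,q)
\quad\text{and}\quad
T(Aq+Bzp)=H^{\mathsf T}(A,B)\,H(q,zp)
\]
are totally nonnegative for all~$A,B\ge0$, so by Theorem~\ref{th:E-AESW} every member of the pencils~$Ap+Bq$ and~$Aq+Bzp$ has all its zeros in~$(-\infty,0]$. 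From this one reads off that~$F(z)\le0\Rightarrow z\le0$ and~$F(z)/z\le0\Rightarrow z\le0$, and a local argument (Fact~\ref{fact:A}) combined with Rolle's theorem then forces the zeros and poles of~$F$ to be simple and interlacing (Lemma~\ref{lemma:H_TNN_F_has_form}). This pencil trick replaces the continued-fraction bridge you correctly note is unavailable; without it, your outline does not close.
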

\begin{remark}\label{rem:degenerating_series}
    The conditions~$a_0\ne 0$ and~$a_k^2= a_{k-1}a_{k+1}$ for all~$k$ are equivalent to the
    condition~$a_k= a_0^{1-k}a_1^{k}\ne 0$ for all~$k$. This case is excluded in~\eqref{item:m3}
    of Theorem~\ref{th:main1} as corresponding to the divergence of the power series~$p(z)$ and,
    unless~$q(z)\equiv 0$, of the series~$q(z)$ by Lemma~\ref{lemma:ann_convergence} (in
    fact, we have~$a_0q(z)\equiv b_0p(z)$).
\end{remark}

Both papers~\cite{HoltzTyaglov,Dyachenko14} exploit a relation to the matching moment problem
trough the Hurwitz transform (see e.g.~\cite[p.~44]{ChebMei} or~\cite[p.~427]{HoltzTyaglov}). In
turn, doubly infinite series do not allow conducting the same procedure due to the lack of the
matching moment problem. To get around the difficulty, we apply a modification of the
technique~\cite{AESW,Karlin} developed in Section~\ref{sec:poles-of-p-and-q-when-H-is-TNN} for
factoring out a totally nonnegative Toeplitz matrix from a totally nonnegative Hurwitz-type
matrix. Another key point is Lemma~\ref{lemma:H_TNN_F_has_form}, which characterizes the
function corresponding to the resulting Hurwitz-type matrix by extending a fact known for
polynomials, see e.g.~\cite[Lemma~3.4]{Wa}. The latter step is based on simple but effective
Lemma~\ref{lemma:T_via_HH}; the converse to this lemma is proved in~\cite{Dyachenko16b}.

By writing that a function has one of the above representations, we assume that the involved
products are locally uniformly convergent unless the converse is stated explicitly. In the above
theorems, the convergence follows from the total nonnegativity of the involved matrices. The
condition of convergence is well-known and can be expressed as the following theorem.
\begin{theorem}[see \emph{e.g.}~{\cite[pp.~7--13,~21]{Levin}}]\label{th:canonical_product}
    The infinite product~$\prod_{\nu=0}^\infty \left(1+\frac{z}{\alpha_\nu}\right)$,
    converges uniformly in~$z$ varying in compact subsets of~$\mathbb{C}$ if and only if the
    series~$\sum_{\nu=0}^\infty \frac 1{|\alpha_\nu|}$ converges. If so, then for
    any~$\varepsilon>0$ the estimates
    \( \prod_{\nu=0}^\infty \left|1+\frac{z}{\alpha_\nu}\right| < C e^{\varepsilon R} \)
    and, outside exceptional disks with an arbitrarily small sum of radii,
    \( \prod_{\nu=0}^\infty \left|1+\frac{z}{\alpha_\nu}\right| > C e^{-\varepsilon R} \)
    provided that~$|z|\le R$ and the positive numbers~$R$ and~$C$ are big enough.
\end{theorem}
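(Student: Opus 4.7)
I would organise the proof along the three claims using the counting function $n(r)\coloneqq\#\{\nu\ge 0:|\alpha_\nu|\le r\}$. From $\sum_\nu 1/|\alpha_\nu|<\infty$ one deduces, by partial summation, that $n(r)=o(r)$ and even $\int_0^\infty n(r)/r^2\,dr<\infty$; this density information drives every estimate.

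For convergence on a compact $K\subset\mathbb C$, only finitely many $\alpha_\nu$ satisfy $|\alpha_\nu|\le 2\sup_K|z|$, and on the tail $|z/\alpha_\nu|<1/2$ uniformly, so the Taylor expansion $\log(1+w)=w+O(w^2)$ reduces uniform convergence of $\sum_\nu\log(1+z/\alpha_\nu)$ on $K$ to that of $\sum_\nu 1/|\alpha_\nu|$. For the upper bound with $|z|\le R$ I split at $|\alpha_\nu|=2R$: the tail contributes at most $2R\sum_{|\alpha_\nu|>2R}1/|\alpha_\nu|=o(R)$ through $|\log(1+z/\alpha_\nu)|\le 2|z|/|\alpha_\nu|$; for the head, $|1+z/\alpha_\nu|\le 1+R/|\alpha_\nu|$ combined with the Abel-summation identity
\[
\sum_{|\alpha_\nu|\le 2R}\log\!\left(1+\frac{R}{|\alpha_\nu|}\right)=n(2R)\log\tfrac{3}{2}+R\int_{0}^{2R}\frac{n(r)\,dr}{r(r+R)}
\]
gives two terms of order $o(R)$ because $n(r)=o(r)$. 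A constant $C$ absorbs the finite contribution from $|\alpha_\nu|<1$, yielding $\prod|1+z/\alpha_\nu|<Ce^{\varepsilon R}$ for $R\ge R(\varepsilon)$.

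The lower bound is the main obstacle. Its tail is handled symmetrically via $\log|1+z/\alpha_\nu|\ge -2|z|/|\alpha_\nu|$, contributing $\ge -o(R)$. For the head I would combine Cartan's lemma with a Borel--Carath\'eodory argument: Cartan's lemma removes a union of disks of arbitrarily small total radius around the branch points $z=-\alpha_\nu$ of the logarithm, in whose complement a single-valued branch of $\log P(z)$ with $P(z)\coloneqq\prod_{|\alpha_\nu|\le 2R}(1+z/\alpha_\nu)$ and $\log P(0)=0$ can be chosen; Borel--Carath\'eodory then transfers the already-established one-sided bound $\log|P(z)|\le\varepsilon R$ on a slightly larger circle into a two-sided bound $|\log P(z)|=o(R)$ in the interior, and taking real parts yields $\log|P(z)|\ge-\varepsilon R$ outside the exceptional disks. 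The delicate point is coordinating the Cartan exceptional set (whose total radius must be made arbitrarily small) with the Borel--Carath\'eodory transfer so that all losses stay of order $o(R)$ uniformly in $R$. Convergence and the upper bound are, by contrast, routine Abel-summation consequences of the density relation $n(r)=o(r)$.
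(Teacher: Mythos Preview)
The paper does not prove this theorem; it is quoted as a classical fact with a reference to Levin. So there is no ``paper's proof'' to compare with, only the standard treatment in Levin, Chapter~I.

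Your arguments for local uniform convergence and for the upper bound are the standard Abel-summation ones and are fine. Two remarks, though. First, the theorem is an ``if and only if'', and you only treat the direction $\sum|\alpha_\nu|^{-1}<\infty\Rightarrow$ convergence; the converse (divergence of the product on the ray through the accumulation direction of the $-\alpha_\nu$ when the series diverges) is easy but should be mentioned.

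Second, and more importantly, the lower-bound argument has a real gap. You propose to remove Cartan's disks around the zeros $-\alpha_\nu$ with $|\alpha_\nu|\le 2R$ and then choose a single-valued branch of $\log P$ on the complement, to which Borel--Carath\'eodory is applied. But removing small disks around the zeros leaves a multiply connected region, and a single-valued branch of $\log P$ does \emph{not} exist there: each excised disk contributes a nontrivial period $2\pi i$. Borel--Carath\'eodory needs $\log P$ holomorphic on a full disk, which is exactly what fails because $P$ vanishes inside.

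In the standard proof (Levin, \S\S3 and~8 of Chapter~I) Borel--Carath\'eodory is not used on $P$ at all. The tail $|\alpha_\nu|>2R$ is handled by your elementary inequality $\log|1+z/\alpha_\nu|\ge -2|z|/|\alpha_\nu|$. The head is handled by Cartan's lemma \emph{alone}: outside disks of total radius $\le 2H$ one has $\prod_{|\alpha_\nu|\le 2R}|z+\alpha_\nu|\ge (H/e)^{m}$ with $m=n(2R)$, and the remaining factor $\prod|\alpha_\nu|^{-1}$ is controlled by the same Abel-summation identity you used for the upper bound, using the full strength $\int_0^\infty n(t)t^{-2}\,dt<\infty$ (not merely $n(r)=o(r)$) to kill the $\log R$ loss. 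Where Borel--Carath\'eodory does enter Levin's book is in the \emph{general} minimum-modulus theorem for an arbitrary entire $f$: one factors $f=P\cdot g$ with $g=f/P$ zero-free on $|z|\le 2R$ and applies Borel--Carath\'eodory to $\log g$, while Cartan handles $P$. In the present situation $f$ is already the canonical product, so $g$ coincides with your tail and the Borel--Carath\'eodory step is vacuous. In short: drop Borel--Carath\'eodory from the head and run Cartan quantitatively, and your outline becomes Levin's proof.
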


\section{Basic properties of infinite Toeplitz and Hurwitz-type matrices}

An important property of totally positive sequences is that they have no gaps, i.e. no zero
coefficients between non-zero coefficients:
\begin{lemma}
    \label{lemma:ann_tnn_zeros}
    If~$p(z)=\sum_{k=-\infty}^\infty a_kz^k$ and the matrix~$T(p)$ is totally nonnegative,
    then~$a_k=0\ne a_{k+1}\implies a_{k-n}=0$
    and~$a_k=0\ne a_{k-1}\implies a_{k+n}=0$ for all~$n\in\mathbb{Z}_{>0}$.
\end{lemma}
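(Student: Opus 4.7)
My plan is to read off the conclusion from an appropriate family of $2\times 2$ minors of the Toeplitz matrix~$T(p)$. As a preliminary step I would record that by applying the hypothesis to $1\times 1$ minors every entry $a_j$ is itself nonnegative; this will be used silently throughout.

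For the first implication, fix $k$ with $a_k=0\ne a_{k+1}$. For each integer $n\ge 0$ I would pick two adjacent rows (say rows $r$ and $r+1$) and two columns separated by~$n+1$ (say columns $c$ and $c+n+1$), chosen so that $c-r=k-n$; the resulting submatrix is
\begin{equation*}
    \begin{pmatrix} a_{k-n} & a_{k+1} \\ a_{k-n-1} & a_k \end{pmatrix}.
\end{equation*}
Total nonnegativity of~$T(p)$ forces $a_{k-n}a_k \ge a_{k+1}a_{k-n-1}$. The left-hand side vanishes because $a_k=0$, while on the right $a_{k+1}>0$ and $a_{k-n-1}\ge 0$, so we must have $a_{k-n-1}=0$. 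Letting $n=0,1,2,\ldots$ yields $a_{k-m}=0$ for every $m\ge 1$, which is the first half of the lemma.

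For the opposite direction I would simply invoke the symmetry $a_j\mapsto a_{-j}$, which corresponds to transposing $T(p)$ and therefore preserves total nonnegativity. Applying what has just been proved to the reflected sequence gives the second implication without repeating the computation.

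The substantive content is entirely in the choice of that particular $2\times 2$ minor, so I expect no real obstacle; the only thing demanding care is keeping the indexing of the four-way infinite matrix straight when writing down the submatrix.
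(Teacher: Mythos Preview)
Your proof is correct and follows essentially the same approach as the paper: both arguments read the conclusion off a single well-chosen $2\times 2$ minor from two adjacent rows of~$T(p)$. The only cosmetic difference is that the paper extracts both implications from the one inequality
\[
0\le\begin{vmatrix} a_k & a_{n+1}\\ a_{k-1} & a_n \end{vmatrix}
\]
by letting either $k$ or $n$ vary, whereas you prove one direction and then invoke the transpose symmetry $a_j\mapsto a_{-j}$ for the other; either way the substance is identical.
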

In other words, if any of the coefficients of~$p(z)$ turns to zero, then all coefficients to the
left or to the right must be zero. In this case,~$p(z)$ is either a Laurent polynomial or a
singly infinite series with no gaps. Lemma~\ref{lemma:ann_tnn_zero_minor} exploits an analogous
property of minors of~$T(p)$.
\begin{proof}[Proof of Lemma~\ref{lemma:ann_tnn_zeros}]
    Note that all coefficients of~$p(z)$ are nonnegative. If for some~$k\in\mathbb{Z}$ the
    coefficient~$a_k$ is zero and the neighbouring one~$a_{k-1}$ is nonzero, then for
    each integer~$n\ge k$ the inequality
    \[
    0\le
    \begin{vmatrix}
        a_k & a_{n+1}\\
        a_{k-1} & a_n
    \end{vmatrix}
    = - a_{n+1} a_{k-1}\le 0
    \]
    yields that $a_{n+1}=0$. Conversely, if~$a_{n}=0\ne a_{n+1}$ for some~$n$, then from the
    same inequality we have~$a_{k-1}=0$ for all~$k\le n$.
\end{proof}
\begin{lemma}
    \label{lemma:ann_tnn_zero_minor}
    If a series~$p(z)=\sum_{k=-\infty}^\infty a_kz^k$ is such that the matrix~$T(p)$ is totally
    nonnegative, and~$0\ne a_{k-1}^2=a_{k}a_{k-2}$ for some integer~$k$, then~$p(z)$ does not
    converge to any function holomorphic in~$\mathbb{C}\setminus\{0\}$.
\end{lemma}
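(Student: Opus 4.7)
The plan is to derive a contradiction by assuming $p(z)$ represents a function holomorphic on $\mathbb{C}\setminus\{0\}$, so that the Laurent series $\sum_{n}a_{n}z^{n}$ must converge on all of $\mathbb{C}\setminus\{0\}$, i.e.\ with radii $R_{+}=+\infty$ and $R_{-}=0$.

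First, I would extract from the $2\times 2$ minors of $T(p)$ the log-concavity $a_{n}^{2}\ge a_{n-1}a_{n+1}$, which implies that the ratios $r_{n}:=a_{n}/a_{n-1}$, defined wherever $a_{n-1},\,a_{n}>0$, form a nonincreasing sequence in~$n$. The hypothesis $a_{k-1}^{2}=a_{k}a_{k-2}\ne 0$ combined with $a_{n}\ge 0$ forces $a_{k-2},\,a_{k-1},\,a_{k}>0$ and $r_{k-1}=r_{k}=:r>0$. Combining monotonicity of $r_{n}$ with Lemma~\ref{lemma:ann_tnn_zeros} (zeros of $(a_{n})$ cluster on one side), the set
\[
S:=\{\,n\in\mathbb{Z}:r_{n}=r\,\}
\]
is a (possibly infinite) interval of $\mathbb{Z}$ containing $\{k-1,k\}$.

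The crucial step is to show that $S$ is unbounded. Suppose for contradiction $S=[L,R]$ with $L+1\le R$ and consider the $3\times 3$ submatrix of $T(p)$ formed from three consecutive rows and three columns such that the top row reads $(a_{L},a_{L+1},a_{R+1})$:
\[
M=\begin{pmatrix} a_{L} & a_{L+1} & a_{R+1}\\ a_{L-1} & a_{L} & a_{R}\\ a_{L-2} & a_{L-1} & a_{R-1}\end{pmatrix}.
\]
Apply the row operations $R_{1}\mapsto R_{1}-rR_{2}$ and $R_{2}\mapsto R_{2}-rR_{3}$. Using $a_{L}=ra_{L-1}$, $a_{L+1}=ra_{L}$ and $a_{R}=ra_{R-1}$ (from $L,\,L+1,\,R\in S$), the first two rows collapse to $(0,0,u)$ and $(v,0,0)$ respectively, where
\[
u:=a_{R+1}-ra_{R}\an v:=a_{L-1}-ra_{L-2}.
\]
By the monotonicity of $r_{n}$ together with $L-1,R+1\notin S$, one obtains $u<0$ and $v>0$; the boundary cases $a_{R+1}=0$ or $a_{L-2}=0$ are handled by Lemma~\ref{lemma:ann_tnn_zeros} (they still yield $u=-ra_{R}<0$ respectively $v=a_{L-1}>0$). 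A one-line expansion gives $\det M=uva_{L-1}<0$, contradicting the nonnegativity of this minor of~$T(p)$.

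Hence $S$ is unbounded in at least one direction. If above, then $r_{n}=r$ for all $n\ge n_{0}$ and $a_{n}=a_{n_{0}}r^{n-n_{0}}$, giving $\limsup_{n\to+\infty}a_{n}^{1/n}=r>0$ and $R_{+}\le 1/r<\infty$; the dual case yields $R_{-}\ge 1/r>0$. Either outcome contradicts $R_{+}=\infty$ and $R_{-}=0$, completing the proof. I expect the main subtlety to be the careful verification of the strict signs $u<0$ and $v>0$ across all boundary situations where some $a_{n}$ vanishes, which is where Lemma~\ref{lemma:ann_tnn_zeros} is essential.
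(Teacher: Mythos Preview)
Your argument is correct. The core idea matches the paper's: use a $3\times 3$ minor of $T(p)$ to force the equality $a_{j-1}^{2}=a_{j}a_{j-2}$ (equivalently $r_{j}=r$) to persist on an unbounded set of indices, then invoke the ratio test to contradict convergence on all of $\mathbb{C}\setminus\{0\}$.

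The packaging differs slightly. The paper first singles out an index $i$ with $a_{i-1}a_{k}\ne a_{k-1}a_{i}$ (treating the fully degenerate case $a_{j}=a_{0}^{1-j}a_{1}^{j}$ separately via Remark~\ref{rem:degenerating_series}) and then, with a minor built from three consecutive \emph{columns} and one distant row, propagates the equality $a_{j-1}^{2}=a_{j}a_{j-2}$ step by step to all $j<k$ (or all $j>k$). You instead define the level set $S=\{n:r_{n}=r\}$, assume it is bounded, and obtain a single contradiction from a minor built from three consecutive \emph{rows} and one distant column; the degenerate case $S=\mathbb{Z}$ is absorbed automatically into ``$S$ unbounded.'' Your version is a touch more economical, while the paper's inductive form makes the step-by-step propagation explicit. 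Both rest on the same mechanism, and the paper itself remarks that a Sylvester identity reduces this lemma to Lemma~\ref{lemma:ann_tnn_zeros}, which is exactly the $2\times 2$ log-concavity input you start from.
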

%\setstretch{1.14}%
\begin{proof}
    The condition~$0\ne a_{k-1}^2=a_{k}a_{k-2}$ implies that~$a_{k}$, $a_{k-1}$ and~$a_{k-2}$
    are nonzero. Unless~$a_j=a_0^{1-j}a_1^j$ for all~$j$, there exists an integer~$i$ such
    that~$a_{i-1}a_{k}\ne a_{k-1}a_{i}$ (see Remark~\ref{rem:degenerating_series}). Therefore,
    assuming~$i>j\coloneqq k$ gives us the following relation:
    \begin{equation} \label{eq:Toeplitz_minor}
        \begin{aligned}
            0\le
            \begin{vmatrix}
                a_{i-2}& a_{i-1} & a_{i}\\
                a_{j-2}& a_{j-1} & a_{j}\\
                a_{j-3}& a_{j-2} & a_{j-1}
            \end{vmatrix}
            &= \frac{1}{a_{j-1}}
            \begin{vmatrix}
                a_{i-2}    & a_{i-1} & a_{i}\\
                a_{j-2}    & a_{j-1} & a_{j}\\
                a_{j-1}a_{j-3}-a_{j-2}^2& 0 & 0
            \end{vmatrix}
            \\
            &= -a_{j-1}^{-1} \left(a_{i-1}a_j-a_{i}a_{j-1}\right)
            \left(a_{j-2}^2-a_{j-1}a_{j-3}\right)\le 0.
        \end{aligned}
    \end{equation}
    Consequently,~$a_{j-2}^2=a_{j-1}a_{j-3}\ne 0$.
    Furthermore,~$a_{i-1}a_{j-1}-a_{i}a_{j-2}=\frac{a_{j-1}}{a_{j}}\left(a_{i-1}a_{j}-a_{i}a_{j-1}\right)\ne0$,
    and hence~\eqref{eq:Toeplitz_minor} is valid for~$j=k-1$. Sequentially
    letting~$j=k-1,k-2,\dots$ in~\eqref{eq:Toeplitz_minor} thus implies
    that~$a_{j-1}^2=a_{j}a_{j-2}\ne0$ for each~$j<k$. According to the ratio test, the
    series~$p(z)$ diverges in the disk of the
    radius~$\lim_{j\to-\infty}\frac{a_{j-1}}{a_{j}}=\frac{a_{k-1}}{a_{k}}>0$. The proof for the
    case~$i+1<j\coloneqq k$ is analogous: since
    \begin{equation*}
    0\le
    \begin{vmatrix}
        a_{j-1}& a_{j} & a_{j+1}\\
        a_{j-2}& a_{j-1} & a_{j}\\
        a_{i-1}& a_{i} & a_{i+1}
    \end{vmatrix}
    =
    \frac{1}{a_{j}}
    \begin{vmatrix}
        a_{j-1}& a_{j} & a_{j+1}\\
        0     & 0     & a_{j}^2-a_{j+1}a_{j-1}\\
        a_{i-1}& a_{i} & a_{i+1}
    \end{vmatrix}
    =
    -a_{j}^{-1}
    \left(a_{i}a_{j-1}-a_{i-1}a_j\right)
    \left(a_{j}^2-a_{j+1}a_{j-1}\right)\le 0,
    \end{equation*}
    the equality~$a_{j}^2=a_{j+1}a_{j-1}\ne 0$ holds. Due
    to~$a_{i-1}a_{j+1}-a_{i}a_{j}=\frac{a_{j+1}}{a_j}\left(a_{i-1}a_{j}-a_{i}a_{j-1}\right)\ne0$,
    the same result is true for each~$j=k+1,k+2,\dots$. Consequently, the series~$p(z)$ has the
    radius of convergence equal
    to~$\lim_{j\to+\infty}\frac{a_{j-1}}{a_{j}}=\frac{a_{k-1}}{a_{k}}>0$.
\end{proof}
In fact, applying a version of the Sylvester determinant identity (see
\emph{e.g.}~\cite[formula~(5.1), p.~136]{Pinkus}) reduces the proof of
Lemma~\ref{lemma:ann_tnn_zero_minor} to the proof of Lemma~\ref{lemma:ann_tnn_zeros}. The
following lemma is an analogue of Lemma~\ref{lemma:ann_tnn_zeros} for Hurwitz-type matrices.
\begin{lemma}\label{lemma:ann_convergence_term}
    Let~$H(p,q)$ be totally nonnegative. If both series are non-trivial and one of them has a
    zero coefficient, then both series terminate on the same side. More specifically,
    \begin{compactitem}
    \item $a_k=0\ne a_{k-1}\implies b_{k+r}=0$,
    \item $a_k=0\ne a_{k+1}\implies b_{k-r+1}=0$,
    \item $b_k=0\ne b_{k-1}\implies a_{k+r-1}=0$,
    \item $b_k=0\ne b_{k+1}\implies a_{k-r}=0$
    \end{compactitem}
    for all~$r=1,2,\dots$. In addition, $a_{k-1},a_k\ne 0\implies b_k\ne 0$
    and~$b_k,b_{k+1}\ne 0\implies a_{k}\ne 0$.
\end{lemma}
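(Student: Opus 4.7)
Like in the proof of Lemma~\ref{lemma:ann_tnn_zeros}, the strategy is to exploit~$2\times 2$ minors of~$H(p,q)$, but this time taken across one row of~$a$-coefficients and one row of~$b$-coefficients with a flexible relative shift. The pattern in~\eqref{eq:Hpq_def} shows that, for every integer~$\alpha$ and every integer~$t$, one can pick an~$a$-row and a~$b$-row of~$H(p,q)$ whose entries in two neighbouring columns are respectively~$(a_\alpha,a_{\alpha+1})$ and~$(b_{\alpha+t},b_{\alpha+t+1})$; the~$a$-row sits above the~$b$-row exactly when~$t\le 0$, and below it when~$t\ge 1$. Reading off the corresponding~$2\times 2$ minor, total nonnegativity of~$H(p,q)$ yields
\[
a_\alpha b_{\alpha+t+1}-a_{\alpha+1}b_{\alpha+t}\ge 0 \text{ if } t\le 0, \an a_\alpha b_{\alpha+t+1}-a_{\alpha+1}b_{\alpha+t}\le 0 \text{ if } t\ge 1,
\]
for every~$\alpha\in\mathbb{Z}$.

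Each of the four primary implications then reduces to a single substitution in this template. For~$a_k=0\ne a_{k-1}$ I take~$\alpha=k-1$ and~$t=r\ge 1$: the inequality becomes~$-a_{k-1}b_{k+r}\le 0$, whence~$b_{k+r}=0$. For~$a_k=0\ne a_{k+1}$ I take~$\alpha=k$ and~$t=1-r\le 0$, obtaining~$-a_{k+1}b_{k-r+1}\ge 0$; for~$b_k=0\ne b_{k-1}$ I take~$\alpha=k+r-2$ and~$t=1-r\le 0$, obtaining~$-a_{k+r-1}b_{k-1}\ge 0$; and for~$b_k=0\ne b_{k+1}$ I take~$\alpha=k-r$ and~$t=r\ge 1$, obtaining~$a_{k-r}b_{k+1}\le 0$. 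In each case the hypothesised nonzero~$a$- or~$b$-coefficient is positive, so the claimed entry must vanish.

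The two ``non-vanishing'' assertions I would handle by contradiction using the non-triviality of~$q$ (resp.~$p$). Suppose~$a_{k-1},a_k\ne 0$ but~$b_k=0$. Since~$q\not\equiv 0$, either there is a largest index~$k_0<k$ with~$b_{k_0}\ne 0$ --- in which case~$b_{k_0+1}=0$ by maximality of~$k_0$ together with~$b_k=0$ --- or there is a smallest~$k_1>k$ with~$b_{k_1}\ne 0$, and analogously~$b_{k_1-1}=0$. In the former situation the third primary implication (applied at~$k'=k_0+1$) gives~$a_{k_0+r}=0$ for all~$r\ge 1$; the choice~$r=k-k_0\ge 1$ forces~$a_k=0$, a contradiction. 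In the latter, the fourth implication (applied at~$k'=k_1-1$) gives~$a_{k_1-1-r}=0$ for all~$r\ge 1$, and~$r=k_1-k\ge 1$ forces~$a_{k-1}=0$, again a contradiction. The symmetric statement~$b_k,b_{k+1}\ne 0\Rightarrow a_k\ne 0$ is obtained by the analogous dichotomy applied to~$p$ and using the first and second primary implications instead.

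The one delicate point is the sign in the minor template: the switch between ``$a$-row above'' and ``$b$-row above'' as~$t$ crosses from~$0$ to~$1$ is precisely what supplies the useful sign in each substitution, and all six assertions then fall out mechanically.
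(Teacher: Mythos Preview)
Your argument is correct, apart from a sign slip in the first substitution: with~$\alpha=k-1$ and~$t=r\ge 1$ your template gives~$a_{k-1}b_{k+r}-a_kb_{k+r-1}\le 0$, hence~$a_{k-1}b_{k+r}\le 0$ (not~$-a_{k-1}b_{k+r}\le 0$), and \emph{that} inequality is what forces~$b_{k+r}=0$. The other three substitutions are written out correctly.

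The paper's proof also relies on~$2\times 2$ minors of~$H(p,q)$, but selects them differently: it keeps the~$a$-row and~$b$-row at the fixed relative shift that makes them consecutive rows of~$H(p,q)$, and instead lets the two \emph{columns} be non-adjacent (separated by~$r$). For the first implication this requires the extra input~$a_{k+r-1}=0$, which the paper imports from Lemma~\ref{lemma:ann_tnn_zeros}; your variable-row-shift, adjacent-column template avoids that preliminary appeal entirely. For the two non-vanishing assertions the paper writes down explicit minors that directly yield~$b_{k\pm r}=0$ (respectively~$a_{k\pm r}=0$) for all~$r\ge 1$, contradicting non-triviality in one stroke; your route via the nearest nonzero coefficient and the already-established primary implications is a clean reduction and equally valid, with the mild bonus of reusing what was just proved.
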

\begin{proof}
    By Lemma~\ref{lemma:ann_tnn_zeros}, the condition~$a_k=0\ne a_{k-1}$ for some~$k$
    yields~$a_{k+r-1}=0$; therefore,
    \[
    0\le\begin{vmatrix}
        b_{k}   & b_{k+r} \\
        a_{k-1} & a_{k+r-1}
    \end{vmatrix}
    =-a_{k-1}b_{k+r}\le 0 \implies b_{k+r}=0.
    \]
    The next three implications follow analogously from evaluating (respectively) the minors
    \[
    \begin{vmatrix}
        a_{k-r+1} & a_{k+1} \\
        b_{k-r+1} & b_{k+1}
    \end{vmatrix}
    ,
    \quad
    \begin{vmatrix}
        a_{k-1} & a_{k+r-1} \\
        b_{k-1} & b_{k+r-1}
    \end{vmatrix}
    \an
    \begin{vmatrix}
        b_{k-r+1} & b_{k+1} \\
        a_{k-r} & a_{k}
    \end{vmatrix}
    .
    \]
    The assumption~$a_{k-1},a_k\ne0=b_k$ gives that the minors
    \[
    \begin{vmatrix}
        a_{k-r}& a_{k}\\
        b_{k-r}& b_{k}
    \end{vmatrix}
    =-a_kb_{k-r}
    \an
    \begin{vmatrix}
        b_{k}& b_{k+r}\\
        a_{k-1}& a_{k+r-1}
    \end{vmatrix}
    =-a_{k-1}b_{k+r}
    \]
    vanish for all~$r=1,2,\dots$, which is inconsistent with the non-triviality of~$q(z)$.
    Similarly,~$b_k,b_{k+1}\ne0=a_k$ gives vanishing of
    \[
    \begin{vmatrix}
        b_{k-r+1}& b_{k+1}\\
        a_{k-r}& a_{k}
    \end{vmatrix}
    =-a_{k-r}b_{k+1}
    \an
    \begin{vmatrix}
        a_{k}& a_{k+r}\\
        b_{k}& b_{k+r}
    \end{vmatrix}
    =-b_ka_{k+r}
    \]
for all~$r=1,2,\dots$, which contradicts to the non-triviality of~$p(z)$.
\end{proof}
%\setstretch{1.15}%
The next fact is relevant to Lemma~\ref{lemma:ann_tnn_zero_minor} for series terminating on the
right, and its proof can also be conducted with the help of the Sylvester determinant identity.
Note that Lemma~\ref{lemma:H_tnn_zero_minor} admits a reformulation for series terminating on
the left.
\begin{lemma}\label{lemma:H_tnn_zero_minor}
    Given series~$p(z)=\sum_{k=-\infty}^{n}a_kz^k$ such that~$a_n\ne 0$
    and~$q(z)=\sum_{k=-\infty}^{\infty}b_kz^k\not\equiv 0$ suppose that~$H(p,q)$ is totally
    nonnegative and~$a_{k-1}b_{k}=a_{k}b_{k-1}\ne 0$ for some~$k<n$. Then
    \begin{compactenum}[\upshape(a)]
    \item\label{item:Hurwitz_minor_b}%
        the inequality~$a_{n-1}b_{n}\ne a_{n}b_{n-1}$ implies that neither~$p(z)$ nor~$q(z)$ can
        converge to any function holomorphic in~$\mathbb{C}\setminus\{0\}$;
    \item\label{item:Hurwitz_minor_c}%
        if at least one of the series~$p(z),q(z)$ converges to a function holomorphic
        in~$\mathbb{C}\setminus\{0\}$, then~$p(z)=\frac{a_{n}}{b_{n}}q(z)$.
    \end{compactenum}
\end{lemma}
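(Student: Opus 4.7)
My plan is to propagate the hypothesis $a_{k-1}b_k = a_k b_{k-1}$ using carefully chosen $3\times 3$ minors of the totally nonnegative matrix $H(p,q)$: rightward to position $n$ for part~(a), and in both directions for part~(b). Let $\mu := b_{k-1}/a_{k-1} = b_k/a_k$; by hypothesis all four coefficients $a_{k-1}, a_k, b_{k-1}, b_k$ are nonzero. Since the $a$-rows and the $b$-rows of $H(p,q)$ respectively form the Toeplitz matrices $T(p)$ and $T(q)$, both of these inherit total nonnegativity.

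For any $m > k$ I consider the minors
\[
D(m) = \det\begin{pmatrix}a_{k-1}&a_k&a_m\\b_{k-1}&b_k&b_m\\a_{k-2}&a_{k-1}&a_{m-1}\end{pmatrix},\quad
D'(m) = \det\begin{pmatrix}a_{k-1}&a_k&a_m\\b_{k-1}&b_k&b_m\\b_{k-2}&b_{k-1}&b_{m-1}\end{pmatrix},
\]
each drawn from three consecutive rows of $H(p,q)$. After the column operation $C_2 \to C_2 - (a_k/a_{k-1})C_1$ the hypothesis kills the first two entries of the new column, and expanding along it yields
\[
D(m) = -\frac{a_{k-1}^2 - a_k a_{k-2}}{a_{k-1}}\,L_m, \qquad D'(m) = -\frac{b_{k-1}^2 - b_k b_{k-2}}{b_{k-1}}\,L_m, \qquad L_m := a_{k-1}b_m - a_m b_{k-1}.
\]
The five quantities $D(m)$, $D'(m)$, $L_m$, $a_{k-1}^2 - a_k a_{k-2}$, $b_{k-1}^2 - b_k b_{k-2}$ are all nonnegative (by TNN of $H(p,q)$, $T(p)$, $T(q)$), so both products must vanish; hence for each $m > k$ either $L_m = 0$, or else the two $m$-independent equalities $a_{k-1}^2 = a_k a_{k-2}$ and $b_{k-1}^2 = b_k b_{k-2}$ both hold.

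For part~(a), if $L_m = 0$ for every $m > k$ then $b_m = \mu a_m$ for all $m \ge k-1$, yielding $a_{n-1}b_n = a_n b_{n-1}$ and contradicting the hypothesis of~(a). Hence the two Toeplitz equalities hold with all six entries nonzero, and Lemma~\ref{lemma:ann_tnn_zero_minor} applied to $T(p)$ and $T(q)$ precludes both $p(z)$ and $q(z)$ from converging to functions holomorphic on $\mathbb{C}\setminus\{0\}$. For part~(b), assume without loss of generality that $p(z)$ has such an extension; by the contrapositive of Lemma~\ref{lemma:ann_tnn_zero_minor} we get $a_{k-1}^2 \ne a_k a_{k-2}$, so $L_m = 0$ for all $m > k$, i.e., $b_m = \mu a_m$ whenever $m \ge k-1$. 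To extend the proportionality to $m < k-1$, I use the further minor
\[
\det\begin{pmatrix}a_{m+1}&a_k&a_{k+1}\\a_m&a_{k-1}&a_k\\b_m&b_{k-1}&b_k\end{pmatrix} = \frac{a_{k+1}a_{k-1} - a_k^2}{a_{k-1}}\bigl(a_m b_{k-1} - a_{k-1}b_m\bigr),
\]
obtained after $C_3 \to C_3 - (a_k/a_{k-1})C_2$. The first factor is nonpositive (a $T(p)$-minor) and the second nonnegative, so either $a_k^2 = a_{k+1}a_{k-1}$ --- which, as $a_k \ne 0$ forces all three entries nonzero, contradicts the convergence of $p(z)$ via Lemma~\ref{lemma:ann_tnn_zero_minor} --- or $b_m = \mu a_m$. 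Hence $b_m = \mu a_m$ for every $m$, so $q(z) = \mu p(z)$ with $\mu = b_n/a_n$ and therefore $p(z) = (a_n/b_n)q(z)$; if instead $q(z)$ is the convergent one, the symmetric argument uses $D'(m)$ together with the analogous $(b,a,b)$-type leftward minor.

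The main obstacle is choosing the $3\times 3$ minors so that a single column operation (invoking the hypothesis) cleanly factors each determinant into one Toeplitz and one Hurwitz minor with compatible signs; with these identities in place, the proof becomes a dichotomy whose ``bad'' branches are exactly the configurations forbidden by Lemma~\ref{lemma:ann_tnn_zero_minor}.
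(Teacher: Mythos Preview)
Your proof is correct and follows essentially the same strategy as the paper: factor well-chosen $3\times 3$ minors of $H(p,q)$ into a product of a Toeplitz $2\times2$ minor and a Hurwitz $2\times2$ minor, use the sign constraints to force a vanishing, and invoke Lemma~\ref{lemma:ann_tnn_zero_minor}. The organization differs in minor ways---you introduce the companion minor $D'(m)$ to handle $q$ directly in part~(a) and run $D(m)$ over all $m>k$ (which also disposes of $b_{n+1},b_{n+2},\dots$ automatically), whereas the paper uses a single instance $D(n)$ together with a chain of $2\times2$ inequalities, a separate leftward minor exploiting $a_{n+1}=0$, and a final check that $b_{n+1}=0$---but these are variations within the same method rather than a different route.
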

\begin{proof}
    By Lemma~\ref{lemma:ann_tnn_zeros}, if~$a_{k-1},a_{n}\ne 0$, then all the
    numbers~$a_{k-1},a_{k},\dots,a_{n}$ are strictly positive.
    Since the last term~$- a_{j}b_{k-1}$ in the right-hand side of
    \[
    0\le
    \begin{vmatrix}
        a_{k-1}& a_{j}\\
        b_{k-1}& b_{j}
    \end{vmatrix}
    =a_{k-1}b_{j} - a_{j}b_{k-1},
    \]
    is nonzero for each~$j=k,k+1,\dots,n$, all the numbers~$b_{k-1},b_{k},\dots,b_{n}$ are
    strictly positive as well. Furthermore, Lemma~\ref{lemma:ann_convergence_term} yields
    $b_{n+2}=b_{n+3}=\dots=0$. The condition~$a_{k-1}b_{k}=a_kb_{k-1}\ne 0$ implies that
    \begin{equation}\label{eq:minor_Hurwitz1}
    0\le
    \begin{vmatrix}
        a_{k-1}& a_{k} & a_{n}\\
        b_{k-1}& b_{k} & b_{n}\\
        a_{k-2}& a_{k-1} & a_{n-1}
    \end{vmatrix}
    =
    \begin{vmatrix}
        a_{k-1}& a_{k} & a_{n}\\
        0& 0 & b_{n}-\frac{b_{k-1}}{a_{k-1}}a_n\\
        a_{k-2}& a_{k-1} & a_{n-1}
    \end{vmatrix}
    =
    \left(\frac{b_{k-1}}{a_{k-1}}a_n-b_{n}\right)
    \begin{vmatrix}
        a_{k-1}& a_{k}\\
        a_{k-2}& a_{k-1}
    \end{vmatrix}
    \le 0
    \end{equation}
    and, due to~$a_{n+1}=0$, for each~$i\le k-2$
    \begin{equation}\label{eq:minor_Hurwitz2}
    0\le
    \begin{vmatrix}
        a_{i+n+1-k}& a_{n} & a_{n+1}\\
        a_{i}& a_{k-1} & a_{k}\\
        b_{i}& b_{k-1} & b_{k}
    \end{vmatrix}
    =
    -a_n (a_ib_k-a_{k}b_{i})
    \le 0
    %\quad i=k-2,k-3,\dots
    \implies a_ib_k=a_{k}b_{i}% \quad\text{for all}~$i\le k$
    .
    \end{equation}
    The last equality shows that~$a_i\ne 0 \iff b_i\ne 0$ for all~$i\le k-2$ since~$a_k$
    and~$b_k$ are nonzero. Moreover, \eqref{eq:minor_Hurwitz2} implies that the series~$p(z)$
    and~$q(z)$ converge in the same domain:
    \[
    q(z)=\frac{b_k}{a_k}p(z) + \sum_{i=k}^{n} \left(b_i - \frac{b_k}{a_k} a_i\right) z^i +b_{n+1}z^{n+1}.
    \]
    
    Let us prove~\eqref{item:Hurwitz_minor_b}.
    Multiplying~\( \frac{a_{k-1}}{a_{n-1}}\ge \frac{b_{k-1}}{b_{n-1}} \)
    and~\( a_{n-1}b_n>b_{n-1}a_n\) gives the inequality~\({a_{k-1}}b_{n}>a_n{b_{k-1}}\).
    Substituting it into the relation~\eqref{eq:minor_Hurwitz1} yields~$a_{k-1}^2=a_{k}a_{k-2}$,
    so the series~$p(z)$ does not converge to any function holomorphic
    in~$\mathbb{C}\setminus\{0\}$ by Lemma~\ref{lemma:ann_tnn_zero_minor}. Neither does the
    series~$q(z)$, because it coincides with~$\frac{b_k}{a_k}p(z)$ up to a Laurent polynomial.
    
    To prove~\eqref{item:Hurwitz_minor_c}, note that the convergence of~$p(z)$ for~$z\ne 0$
    implies~$a_{k-1}^2\ne a_{k}a_{k-2}$ by Lemma~\ref{lemma:ann_tnn_zero_minor}. Due
    to~\eqref{eq:minor_Hurwitz1}, the coefficients of~$p(z)$ and~$q(z)$
    satisfy~$a_{k-1}b_{n}=a_{n}b_{k-1}$. We will get a contradiction to this equality
    whenever~$a_{i}b_{n}> a_{n}b_{i}$ for some~$i=k,k+1,\dots n-1$:
    \[
    a_{k-1}b_{n}=\frac{a_{k-1}}{a_{i}}a_{i}b_{n}
    > \frac{a_{k-1}}{a_{i}}a_{n}b_{i}
    \ge a_n b_{k-1}
    \]
    since~$a_{k-1}b_{i}\ge b_{k-1}a_{i}$. Summing up,~$a_{i}b_{n}=a_{n}b_{i}$ for each~$i<n$. The
    coefficient~$b_{n+1}$ is zero, because
    \[
    0\le
    \begin{vmatrix}
        a_{n-1}& a_{n} & a_{n+1}\\
        b_{n-1}& b_{n} & b_{n+1}\\
        a_{n-2}& a_{n-1} & a_{n}
    \end{vmatrix}
    =-b_{n+1}
    \begin{vmatrix}
        a_{n-1}& a_{n}\\
        a_{n-2}& a_{n-1}
    \end{vmatrix}
    \le 0.
    \]
    Consequently,~$q(z)=\frac{b_n}{a_n}p(z)$.
\end{proof}

\begin{theorem}[Whitney, see~\cite{Whitney}]\label{th:whitney}
    Let~$m,n,j$ be positive integers such that~$j\le m$ and
    let~$\displaystyle\big\{a_{ik}\big\}_{1\le i\le m,1\le k\le n}$ be real numbers. The following matrices are
    totally nonnegative simultaneously:
    \[
    \begin{pmatrix}
        1&a_{12}&a_{13}&\hdots& a_{1n}\\
        1&a_{22}&a_{23}&\hdots& a_{2n}\\
        \vdots&\vdots&\vdots&\ddots&\vdots\\
        1&a_{j2}&a_{j3}&\hdots& a_{jn}\\
        0&a_{j+1,2}&a_{j+1,3}&\hdots& a_{j+1,n}\\
        \vdots&\vdots&\vdots&\ddots&\vdots\\
        0&a_{m2}&a_{m3}&\hdots& a_{mn}\\
    \end{pmatrix}
    \an
    \begin{pmatrix}
        a_{12}&a_{13}&\hdots& a_{1n}\\
        a_{22}-a_{12}&a_{23}-a_{13}&\hdots& a_{2n}-a_{1n}\\
        \vdots&\vdots&\ddots&\vdots\\
        a_{j2}-a_{j-1,2}&a_{j3}-a_{j-1,3}&\hdots& a_{jn}-a_{j-1,n}\\
        a_{j+1,2}&a_{j+1,3}&\hdots& a_{j+1,n}\\
        \vdots&\vdots&\ddots&\vdots\\
        a_{m2}&a_{m3}&\hdots& a_{mn}\\
    \end{pmatrix}
    .
    \]
\end{theorem}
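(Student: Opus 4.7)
Let $M_1$ and $M_2$ denote the two matrices displayed in the theorem, in that order. The plan is to prove both implications by expressing every minor of one matrix as a non-negative integer combination of minors of the other. Write $A$ for the submatrix of $M_1$ obtained by deleting its first column, and set $b_i := a_i - a_{i-1}$ for $2 \le i \le j$, $b_i := a_i$ otherwise, so that the rows of $M_2$ are exactly $b_1, \ldots, b_m$.

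For the implication ``$M_2$ totally nonnegative implies $M_1$ totally nonnegative,'' I would take an arbitrary minor $\det M_1[I, K]$ with $I = \{i_1 < \cdots < i_p\}$ and apply the row operations $R_{i_l} \leftarrow R_{i_l} - R_{i_{l-1}}$ inside the submatrix for those $l \ge 2$ with $i_l \le j$. When $1 \in K$ and $i_1 \le j$, this zeroes out the first column except at row $i_1$; expanding along the first column and then applying multilinearity to each telescoping sum $a_{i_l} - a_{i_{l-1}} = b_{i_{l-1}+1} + \cdots + b_{i_l}$ turns the minor into a sum of $\det M_2[\{k_2, \ldots, k_s, i_{s+1}, \ldots, i_p\}, K \setminus \{1\}]$ over tuples $k_l \in \{i_{l-1}+1, \ldots, i_l\}$. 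The crucial point is that these range constraints automatically force $k_l < k_{l+1}$, so every resulting $M_2$-row set is strictly increasing and all coefficients come out as $+1$, with no sign corrections. An analogous expansion handles the case $1 \notin K$ (using the same telescoping together with $a_{i_1} = b_1 + \cdots + b_{i_1}$), and the first implication follows.

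For the reverse implication, the easy subcase occurs when the row set $J = \{j_1 < \cdots < j_q\}$ of the $M_2$-minor has $j_1 \ge 2$ and $J \cap \{2, \ldots, j\}$ forms a consecutive block $\{j_1, j_1 + 1, \ldots, j_t\}$: the sum above then collapses to a single term and yields $\det M_2[J, K] = \det M_1[\{j_1 - 1\} \cup J, \{1\} \cup K] \ge 0$; the parallel boundary case $j_1 = 1$ with $J \cap \{1, \ldots, j\} = \{1, 2, \ldots, t\}$ is handled by cumulative row operations inside the $M_2$-minor, giving $\det M_2[J, K] = \det M_1[J, K] \ge 0$. I plan to reduce the general case, where there are interior gaps in the differenced portion of $J$, to these subcases by induction on the number of gaps --- at each step inverting the formula from the first direction to rewrite the $M_2$-minor as a telescoping difference of two $M_1$-minors.

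The principal obstacle lies in this inductive step: the inequalities between $M_1$-minors that must be verified --- for instance $\det M_1[\{2, 3\}, K] \ge \det M_1[\{1, 2, 3\}, \{1\} \cup K]$, whose difference is precisely $\det M_2[\{1, 3\}, K]$ --- are not built into the definition of total nonnegativity and have to be derived from Sylvester-type identities applied to larger bordered submatrices of $M_1$. If the combinatorial route proves too intricate, a fallback is the density argument: perturb $M_1$ into the totally positive cone, where all minors are strictly positive and algebraic manipulations go through unconditionally, and pass to the closed-cone limit --- though one must still verify that $M_2$ inherits total nonnegativity in that limit.
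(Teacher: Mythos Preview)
The paper does not supply its own proof of this theorem; it is quoted as a known result and attributed to Whitney's 1952 paper, so there is no in-paper argument to compare your proposal against.

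On your proposal itself: the direction ``$M_2$ totally nonnegative $\Rightarrow$ $M_1$ totally nonnegative'' is sound as sketched --- the telescoping and multilinearity produce only correctly ordered row-sets with coefficient $+1$, and this can be made rigorous exactly as you describe (equivalently, one writes $M_1 = L\,[e_1\,|\,M_2]$ with $L$ a block-diagonal lower-triangular matrix of $1$'s, which is totally nonnegative, and applies Cauchy--Binet). The reverse direction is where the content lies, and here your outline is genuinely incomplete. The consecutive-block subcase is fine, but the ``induction on the number of gaps'' is only a hope: the inequalities you would need, such as $\det M_1[\{2,3\},K]\ge\det M_1[\{1,2,3\},\{1\}\cup K]$, are not immediate consequences of total nonnegativity, and you have not exhibited the Sylvester-type identity that would yield them. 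More seriously, your density fallback is circular: the standard proof that totally positive matrices are dense among totally nonnegative ones (Whitney's density theorem) rests precisely on the reduction lemma you are trying to establish, so you cannot invoke it here. The usual route --- in Whitney's paper and in Pinkus's book --- is to perform one subtraction at a time (replace row $j$ by row $j$ minus row $j-1$, which turns the column of $j$ ones into $j-1$ ones) and to show that this single step preserves total nonnegativity; the only non-trivial minors are those containing row $j$ but not row $j-1$, and these are handled by bordering with row $j-1$ and column $1$ and expanding. Iterating $j-1$ times then gives $[e_1\,|\,M_2]$, which is totally nonnegative if and only if $M_2$ is.
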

\setstretch{1.2}%
\begin{lemma}[{\emph{e.g.}, \cite[p.~6]{Pinkus}}]\label{lemma:TNN_order}
    Given an~$n\times m$ matrix~$M$ let~$\widetilde M$ be the matrix obtained from~$M$ by
    permuting its rows and columns in the opposite order. Then~$M$ and~$\widetilde M$ are
    totally nonnegative simultaneously.
\end{lemma}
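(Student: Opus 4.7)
The plan is to show that every $k\times k$ minor of $\widetilde M$ equals some $k\times k$ minor of $M$, after which total nonnegativity of one transfers immediately to the other.

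First, I would fix the bijection between rows/columns. If $\widetilde M$ is obtained from $M$ by reversing the order of rows and of columns, then the entry in row $i$, column $j$ of $\widetilde M$ equals the entry in row $n+1-i$, column $m+1-j$ of $M$. Now pick any index sets $1\le i_1<\cdots<i_k\le n$ and $1\le j_1<\cdots<j_k\le m$ for $\widetilde M$; the corresponding submatrix of $M$ is the one on rows $n+1-i_k<n+1-i_{k-1}<\cdots<n+1-i_1$ and columns $m+1-j_k<\cdots<m+1-j_1$, and passing from the latter to the former amounts to reversing both the row order and the column order of the $k\times k$ submatrix.

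Next I would compute what this double reversal does to the determinant. Reversing the order of $k$ items is a permutation whose sign is $(-1)^{\lfloor k/2\rfloor}$ (obvious from counting transpositions $(1,k),(2,k-1),\ldots$). Therefore reversing the rows multiplies the determinant by $(-1)^{\lfloor k/2\rfloor}$, and so does reversing the columns; the two factors compose to $(-1)^{2\lfloor k/2\rfloor}=+1$. Hence every minor of $\widetilde M$ is numerically equal to the associated minor of $M$.

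Since the equality runs in both directions (the operation of reversing rows and columns is its own inverse), nonnegativity of all minors of $M$ is equivalent to nonnegativity of all minors of $\widetilde M$, proving the lemma. There is no real obstacle here: the only point of care is computing the sign of the reversal permutation correctly and noting that two such reversals cancel out, so I would phrase the argument compactly around those two sentences.
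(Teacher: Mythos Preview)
Your proposal is correct and follows essentially the same argument as the paper: both observe that the minor of $\widetilde M$ on given rows and columns equals the corresponding minor of $M$ on the reversed index sets, the sign change from each reversal squaring to $+1$. The only cosmetic difference is that the paper writes the sign of the reversal permutation as $(-1)^{k(k-1)/2}$ (counting inversions) while you write it as $(-1)^{\lfloor k/2\rfloor}$ (counting swaps), which of course agree.
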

\begin{proof}
    Given an arbitrary positive integer~$k\le\min\{n,m\}$ and any two sets of
    integers~$0<i_1<i_2<\dots\penalty10<i_k\le n$ and~$0<j_1<\dots<j_k\le m$ denote the minor of
    the matrix~$M$ with rows~$i_1,i_2,\dots,i_k$ and columns~$j_1,j_2,\penalty10\dots,j_k$ by
    \[
    M\binom{i_1,i_2,\dots,i_k}{j_1,j_2,\dots,j_k}.
    \]
    Then the lemma follows from the identity
    \[
    M\binom{i_1,i_2,\dots,i_k}{j_1,j_2,\dots,j_k}
    =\left((-1)^{\frac{k(k-1)}{2}}\right)^2\cdot
    \widetilde M\binom{n-i_k,n-i_{k-1},\dots,n-i_1}{m-j_k,m-j_{k-1},\dots,m-j_1}.
    \]
\end{proof}

\setstretch{1.15}%
\begin{corollary}
    Let~$p(z)$ and~$q(z)$ be two power series, and
    let~$\check p(z)\coloneqq p\left(\frac 1z\right)$
    and~$\check q(z)\coloneqq q\left(\frac 1z\right)$. Then the mat\-rices~$H(p,q)$
    and~$H(\check q,\check p)$ are totally nonnegative simultaneously.
\end{corollary}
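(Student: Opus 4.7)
The plan is to recognize $H(\check q, \check p)$ as the result of simultaneously reversing rows and columns of $H(p, q)$ (in the four-way infinite sense) and then to invoke Lemma~\ref{lemma:TNN_order} on arbitrary finite square submatrices.

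First, I would fix the natural $\mathbb{Z}\times\mathbb{Z}$ indexing in which, for~$k,j\in\mathbb{Z}$, the entry of~$H(p,q)$ at row~$2k-1$, column~$j$ equals~$a_{j-k}$, and the entry at row~$2k$, column~$j$ equals~$b_{j-k}$; the analogous entries of~$H(\check q,\check p)$ then read~$\check b_{j-k}=b_{k-j}$ in odd rows and~$\check a_{j-k}=a_{k-j}$ in even rows. A case split on the parity of~$i$ yields the identity
\[
H(\check q,\check p)_{i,j}=H(p,q)_{1-i,\,1-j}
\qquad\text{for all }i,j\in\mathbb{Z}.
\]
Two effects combine here: the column reversal~$j\mapsto 1-j$ replaces each~$a_k$ (resp.~$b_k$) by~$\check a_k$ (resp.~$\check b_k$), while the row reversal~$i\mapsto 1-i$ swaps the interleaved $a$- and $b$-rows, which is exactly the effect of exchanging~$p$ and~$q$ in the Hurwitz construction.

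Given this identity, any finite square submatrix of~$H(\check q,\check p)$ on rows~$r_1<\dots<r_n$ and columns~$c_1<\dots<c_n$ coincides entry-wise with the row-and-column reversal of the finite submatrix of~$H(p,q)$ on rows~$1-r_n<\dots<1-r_1$ and columns~$1-c_n<\dots<1-c_1$. By Lemma~\ref{lemma:TNN_order}, these two finite matrices are totally nonnegative simultaneously (indeed they have equal minors of every order), and since every finite minor of~$H(\check q,\check p)$ arises this way and conversely, the corollary follows. The only subtle point is keeping track of the parity swap between $a$- and $b$-rows that accompanies row reversal; once this is seen to correspond to the exchange of~$p$ and~$q$, the rest is an immediate appeal to Lemma~\ref{lemma:TNN_order}.
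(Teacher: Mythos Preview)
Your proof is correct and follows essentially the same approach as the paper: both arguments identify $H(\check q,\check p)$ with the row-and-column reversal of $H(p,q)$ and then appeal to Lemma~\ref{lemma:TNN_order} on arbitrary finite submatrices. Your version is simply more explicit about the indexing identity $H(\check q,\check p)_{i,j}=H(p,q)_{1-i,1-j}$, which the paper leaves implicit.
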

\begin{proof}
    Indeed, assume that~$p(z)=\sum_{n=-\infty}^\infty a_nz^n$\;
    and~$q(z)=\sum_{n=-\infty}^\infty b_nz^n$.\;
    Then~$\check p(z)=\sum_{n=-\infty}^\infty a_{-n}z^n$\;
    and~$\check q(z)=\sum_{n=-\infty}^\infty b_{-n}z^n$, so each submatrix
    of~$H(\check q,\check p)$ coincides with some submatrix of~$H(p,q)$ up to the permutation of
    rows and columns in the opposite order.
\end{proof}

%\setstretch{1.23}%
\begin{lemma}\label{lemma:H_zpq_order}
    Let~$p(z)$ and~$q(z)$ be two power series, and let~$\widetilde p(z)= zp(z)$. Then the
    matrices~$H(p,q)$ and~$H(q,\widetilde p)$ coincide up to a shift in indexation.
\end{lemma}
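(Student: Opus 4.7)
The statement is essentially a bookkeeping observation, and the plan is a direct comparison of the entries of the two matrices under the indexing convention stated after Theorem~\ref{th:E-AESW} (the uppermost row and leftmost column of a four-way infinite matrix carry index~$1$, with the indexation then extending to all of~$\mathbb{Z}$). Writing $p(z)=\sum_k a_k z^k$ and $q(z)=\sum_k b_k z^k$, one has $\widetilde p(z)=\sum_k a_{k-1}z^k$; that is, multiplying by $z$ amounts to the shift $a_k\mapsto a_{k-1}$ in the coefficient sequence.

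Reading off the pattern in~\eqref{eq:Hpq_def}, I would record that the $(2s-1,\,j)$-entry of $H(p,q)$ is $a_{j-s}$ and its $(2s,\,j)$-entry is $b_{j-s}$ for all integers~$s$ and~$j$. Applying the same recipe to $H(q,\widetilde p)$, with $q$ now playing the role of the ``$p$-series'' and $\widetilde p$ the role of the ``$q$-series'', gives $b_{j-s}$ for the $(2s-1,\,j)$-entry and $\widetilde a_{j-s}=a_{j-s-1}$ for the $(2s,\,j)$-entry.

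Comparing the two tables of entries, row $2s-1$ of $H(q,\widetilde p)$ coincides with row $2s$ of $H(p,q)$, while row $2s$ of $H(q,\widetilde p)$ coincides with row $2s+1=2(s+1)-1$ of $H(p,q)$. Hence row $k$ of $H(q,\widetilde p)$ equals row $k+1$ of $H(p,q)$ for every $k\in\mathbb{Z}$, with the columns in perfect alignment; the two matrices are therefore identical after a unit shift of the row indexation. There is no genuine obstacle here — the only place where something could slip is keeping the alternating $a/b$ pattern of rows consistent with the factor of~$z$ absorbed into~$\widetilde p$, and this is handled automatically by the shift $a_k\mapsto a_{k-1}$ encoding the multiplication by~$z$.
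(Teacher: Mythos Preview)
Your proof is correct and follows the same approach as the paper: both arguments verify directly that the entries of $H(q,\widetilde p)$ are those of $H(p,q)$ shifted by one row, the paper by writing out the matrix of $H(q,\widetilde p)$ explicitly and you via the entry formulae $H(p,q)_{2s-1,j}=a_{j-s}$, $H(p,q)_{2s,j}=b_{j-s}$. The conclusion $H(q,\widetilde p)_{k,j}=H(p,q)_{k+1,j}$ matches the paper's statement that $H(q,\widetilde p)$ is obtained by increasing the row indices of $H(p,q)$ by~$1$.
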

\begin{proof}
    Denote~$p(z)=\sum_{k=-\infty}^\infty\,a_kz^k$ and~$q(z)=\sum_{k=-\infty}^\infty\,b_kz^k$.
    On the one hand,~$\widetilde p(z)=\sum_{n=-\infty}^{\infty} a_{n-1}z^n$ and, hence,
    \[
        H(q,\widetilde p)={\begin{pmatrix}
                \ddots & \vdots & \vdots & \vdots & \vdots & \vdots & \vdots & \iddots\\
                \hdots & b_0& b_1& b_2 & b_3 & b_4 & b_5 & \hdots\\
                \hdots & a_{-1} & a_0& a_1& a_2 & a_3 & a_4 & \hdots\\
                \hdots & b_{-1} & b_0 & b_1 & b_2 & b_3 & b_4 & \hdots\\
                \hdots & a_{-2} & a_{-1} & a_0 & a_1 & a_2 & a_3 & \hdots\\
                \hdots & b_{-2} & b_{-1} & b_0 & b_1 & b_2 & b_3 & \hdots\\
                \hdots & a_{-3} & a_{-2} & a_{-1} & a_0 & a_1 & a_2 & \hdots\\
                \iddots & \vdots & \vdots & \vdots & \vdots & \vdots & \vdots & \ddots
            \end{pmatrix}}.
    \]
    On the other hand, shifting the whole matrix~$H(p,q)$ up results in the same matrix; that
    is,~$H(q,\widetilde p)$ can be obtained by increasing%
    \footnote{There are infinitely many equivalent ways to describe this shift in indexation,
        because the matrix~$H(p,q)$ does not alter when the indices of columns change by~$k$
        and, simultaneously, the indices of rows change by~$2k$ for any integer~$k$.}
    the indices of rows in~$H(p,q)$ by~$1$.
\end{proof}

\section{Poles and exponential factors}
\label{sec:poles-of-p-and-q-when-H-is-TNN}
If one of the series~$p(z)$ or~$q(z)$ is trivial, it converges in the whole plane; in this
special case the total nonnegativity of~$H(p,q)$ does not imply that another series converges in
the same domain. For non-trivial series, the following lemma shows that the total nonnegativity
of~$H(p,q)$ yields the same annulus of convergence for both~$p(z)$ and~$q(z)$.
\begin{lemma}
    \label{lemma:ann_convergence}
    Let power series~$p(z)$ and~$q(z)$ be non-trivial. If the matrix~$H(p,q)$ is totally
    nonnegative, then the series converge in the same annulus, say
    $\{z\in\mathbb{C}:0\le r<|z|<R\le\infty\}$. If~$a_k\ne0$ for all~$k\ll 0$, then
    \[\lim_{k\to-\infty}\frac {b_{k}}{b_{k+1}}= \lim_{k\to-\infty}\frac {a_{k}}{a_{k+1}} =r;\]
    if~$a_k\ne0$ for all~$k\gg 0$, then
    \[\lim_{k\to\infty}\frac {b_{k}}{b_{k+1}}= \lim_{k\to\infty}\frac {a_{k}}{a_{k+1}} =R.\]
    The annulus can be empty, then for all~$k$ the coefficients satisfy~$a_k= a_0^{1-k}a_1^{k}$
    and~$b_k = \frac{b_0}{a_0} a_k =b_0^{1-k}b_1^{k}$, \emph{i.e.} all minors of~$H(p,q)$ of
    order~$\ge2$ vanish.
\end{lemma}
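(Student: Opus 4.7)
The plan is to show, using only $2\times 2$ minors of $H(p,q)$, that the ratios $b_k/b_{k+1}$ are sandwiched between $a_{k-1}/a_k$ and $a_k/a_{k+1}$; once this is established, the convergence of $q(z)$ in the same annulus as $p(z)$ and the degeneration in the empty-annulus case both follow directly.

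First I would observe that $T(p)$ is obtained by extracting the odd rows of $H(p,q)$ (and $T(q)$ the even rows), so both Toeplitz matrices inherit total nonnegativity. By Lemma~\ref{lemma:ann_tnn_zeros} the nonzero coefficients of each series form a contiguous block of indices, and by Lemma~\ref{lemma:ann_convergence_term} the two series terminate on the same sides; hence we may restrict attention to a range of $k$ on which all $a_k,b_k$ are strictly positive. On that range the inequality $a_{k-1}^2\ge a_k a_{k-2}$, coming from a $2\times 2$ minor of $T(p)$, shows that $\rho_k\coloneqq a_k/a_{k+1}$ is non-decreasing in $k$, and the mixed $2\times 2$ minors of $H(p,q)$ taken in rows~$1,2$ and rows~$2,3$ yield
\[
a_{k-1}b_k \;\ge\; a_k b_{k-1} \an a_k b_k \;\ge\; a_{k-1}b_{k+1},
\]
which rearrange to the sandwich
\[
\rho_{k-1} \;\le\; \frac{b_k}{b_{k+1}} \;\le\; \rho_k.
\]

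The monotone sequence $\rho_k$ admits one-sided limits $r\coloneqq\lim_{k\to-\infty}\rho_k$ and $R\coloneqq\lim_{k\to+\infty}\rho_k$ on whichever side the $a$-series does not terminate (on a terminated side the corresponding radius is trivially $0$ or $\infty$). By d'Alembert's criterion these limits coincide with the inner and outer radii of convergence of $p(z)$, and the sandwich transfers the same limits, hence the same Laurent annulus, to $q(z)$; this handles the annulus claim and the two limit identities simultaneously.

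Finally, the empty-annulus case corresponds to $r=R$. The sandwich then collapses into equalities for every $k$, so $\rho_k$ and $b_k/b_{k+1}$ are all equal to a single common value $a_1/a_0=b_1/b_0$. This yields $a_k = a_0^{1-k}a_1^k$ and $b_k = (b_0/a_0)\, a_k = b_0^{1-k}b_1^k$, so every row of $H(p,q)$ becomes a scalar multiple of one fixed geometric sequence; consequently $H(p,q)$ has rank one and all its minors of order $\ge 2$ vanish. The only delicate point is the bookkeeping in the partially terminating cases, so that the sandwich still operates on a nonempty index range with strictly positive entries; once this is in place, the remaining conclusions are essentially automatic.
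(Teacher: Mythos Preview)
Your proposal is correct and follows essentially the same route as the paper: both arguments use Lemmata~\ref{lemma:ann_tnn_zeros} and~\ref{lemma:ann_convergence_term} to reduce to a range of strictly positive coefficients, then derive the same sandwich $a_{k-1}/a_k \le b_k/b_{k+1} \le a_k/a_{k+1}$ from the identical pair of $2\times2$ minors, apply the ratio test, and collapse the sandwich when $r=R$. You are in fact slightly more explicit than the paper in invoking the log-concavity $a_k^2\ge a_{k-1}a_{k+1}$ to justify monotonicity of $\rho_k$ (and hence existence of the limits), a step the paper glosses over.
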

\begin{proof}
    By Lemmata~\ref{lemma:ann_tnn_zeros} and~\ref{lemma:ann_convergence_term}, there are two
    mutually exclusive possibilities: $\exists n\in\mathbb{Z}$ such that~$a_k=b_k=0$ for
    all~$k>n$, or $\exists n\in\mathbb{Z}$ such that~$a_k,b_k>0$ for all~$k>n$. In the former
    case, both series~$p(z)$ and~$q(z)$ converge outside some disk~$|z|\le r<\infty$ and we
    put~$R=\infty$. In the latter case,
    \begin{equation*}%\label{eq:lims_bk_between_ak_prod}
        % \begin{aligned}
        \begin{vmatrix}
            a_k & a_{k+1}\\
            b_k & b_{k+1}
        \end{vmatrix}
        =a_kb_{k+1} - b_ka_{k+1}
        \ge 0
        \an
        \begin{vmatrix}
            b_k & b_{k+1}\\
            a_{k-1} & a_k
        \end{vmatrix}
        =a_kb_k - a_{k-1}b_{k+1}
        \ge 0
    \end{equation*}
    together give
    \begin{equation}\label{eq:lims_bk_between_ak}
        \frac{a_{k-1}}{a_k} \le \frac{b_k}{b_{k+1}} \le \frac{a_k}{a_{k+1}}
    \end{equation}
    %\setstretch{1.25}%
    for all~$k>n$, \emph{i.e.} the limits
    \begin{equation}\label{eq:lims_R}
        \lim_{k\to\infty}\frac {a_k}{a_{k+1}} =
        \lim_{k\to\infty}\frac {b_k}{b_{k+1}} \eqqcolon
        R\in\left[\frac {a_{n+1}}{a_{n+2}},+\infty\right]\subset(0,+\infty]
    \end{equation}
    exist. By the ratio test, the radius of convergence of the
    series~$\sum_{k=1}^\infty\,a_kz^k$ and~$\sum_{k=1}^\infty\,b_kz^k$ is equal to~$R$.

    Analogously, there are two mutually exclusive possibilities: $\exists n\in\mathbb{Z}$ such
    that~$a_k=b_k=0$ for all~$k\le n$, or $\exists n\in\mathbb{Z}$ such that~$a_k,b_k>0$ for
    all~$k\le n$. In the former case, both series~$p(z)$ and~$q(z)$ converge in the
    disk~$|z|<R$, that is~$r=0$. In the latter case, the
    inequality~\eqref{eq:lims_bk_between_ak} holds provided that~$k<n$, that is
    \begin{equation}\label{eq:lims_r}
        \lim_{k\to-\infty}\frac {a_k}{a_{k+1}} =
        \lim_{k\to-\infty}\frac {b_k}{b_{k+1}} \eqqcolon
        r\in\left[0,\frac {a_{n-1}}{a_{n}}\right]\subset[0,+\infty).
    \end{equation}
    So, the ratio test implies that~$\sum_{k=-\infty}^0\,a_kz^k$ and~$\sum_{-\infty}^0\,b_kz^k$
    converge absolutely outside the disk~$|z|\le r$. As a result,~$p(z)$ and~$q(z)$ converge
    absolutely if and only if~$r<|z|$ and~$|z|<R$ simultaneously.

    Now, note that any of the equalities~$r=0$ and~$R=\infty$ yields~$r< R$. If~$r>0$
    and~$R<\infty$, then all coefficients of the series~$p(z)$ and~$q(z)$ are positive by
    Lemma~\ref{lemma:ann_tnn_zeros}, and hence the inequality~\eqref{eq:lims_bk_between_ak}
    holds for each integer~$k$. Accordingly,
    \[
    r = \lim_{k\to-\infty}\frac {a_k}{a_{k+1}} \le \lim_{k\to\infty}\frac {a_k}{a_{k+1}} = R.
    \]
    If~$r=R$, then for all~$k\in\mathbb{Z}$ we necessarily have
    \[
    \frac {a_k}{a_{k+1}} = \frac {a_{0}}{a_{1}} = \frac {b_k}{b_{k+1}} = \frac {b_{0}}{b_{1}},
    \quad
    \]
    that is
    \[
    a_k = a_{k-1}\frac{a_1}{a_0} = a_0\left(\frac{a_1}{a_0}\right)^k = a_0^{1-k}a_1^{k}
    \an
    b_k = b_{k-1}\frac{b_1}{b_0} = b_0\left(\frac{a_1}{a_0}\right)^k =\frac{b_0}{a_0} a_k.
    \]
\end{proof}

%\setstretch{1.25}%
\begin{lemma}\label{lemma:remove_pole}
    Let the matrix~$H(p,q)$ be totally nonnegative, and let~$r<|z|<R$ be the annulus of
    convergence of the series~$p(z)$ and~$q(z)$ provided by Lemma~\ref{lemma:ann_convergence}.
    Then the inequality~$R<\infty$ implies that the matrix~$H(p_1,q_1)$
    with~$p_1(z)\coloneqq\big(1-\frac{z}{R}\big)p(z)$
    and~$q_1(z)\coloneqq\big(1-\frac{z}{R}\big)q(z)$ is totally nonnegative, and the
    inequality~$r>0$ implies that the matrix~$H(p_2,q_2)$ is totally nonnegative,
    where~$p_2(z)\coloneqq \big(1-\frac{r}{z}\big)p(z)$
    and~$q_2(z)\coloneqq \big(1-\frac{r}{z}\big)q(z)$.
\end{lemma}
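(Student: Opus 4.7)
I would focus on the case $R<\infty$. The case $r>0$ then follows by applying the first statement to $H(\check q,\check p)$: since $\check q(z)=q(1/z)$ and $\check p(z)=p(1/z)$ have outer radius of convergence $1/r$, this yields total nonnegativity of $H((1-rz)\check q,(1-rz)\check p)=H(\check q_2,\check p_2)$, which by the corollary after Lemma~\ref{lemma:TNN_order} is equivalent to total nonnegativity of $H(p_2,q_2)$.

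For the case $R<\infty$, the coefficient identities $(p_1)_n=a_n-a_{n-1}/R$ and $(q_1)_n=b_n-b_{n-1}/R$, combined with the block structure of~\eqref{eq:Hpq_def}, give the row relation
\[
\operatorname{row}_k H(p_1,q_1)=\operatorname{row}_k H(p,q)-\tfrac{1}{R}\operatorname{row}_{k+2}H(p,q)\qquad\text{for every }k\in\mathbb{Z},
\]
since rows $k$ and $k+2$ of $H(p,q)$ share the same coefficient type ($a$ or $b$) and are column-shifts of each other. Any $m\times m$ minor of $H(p_1,q_1)$ with row set $I=\{i_1<\dots<i_m\}$ and column set $J$ therefore expands by multilinearity in rows as
\[
\det H(p_1,q_1)[I;J]=\sum_{S\subseteq\{1,\dots,m\}}\bigl(-\tfrac{1}{R}\bigr)^{|S|}\det H(p,q)[I_S;J],
\]
where $I_S$ replaces $i_s$ by $i_s+2$ for each $s\in S$ (terms with repeated row indices vanishing). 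Each summand is nonnegative by total nonnegativity of $H(p,q)$, but the alternating signs call for a more refined argument.

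The plan is to approximate $1/R$ by the genuine coefficient ratios. By Lemma~\ref{lemma:ann_convergence} combined with the log-concavity $a_m^2\ge a_{m-1}a_{m+1}$ (which follows from total nonnegativity of $H(p,q)$), the ratios $a_{m+1}/a_m$ are non-increasing in~$m$ and converge to $1/R$ as $m\to\infty$; an analogous statement holds for the $b$'s. For each large~$N$, I would replace $1/R$ in the row operation by $a_{N+1}/a_N$ and observe that the modified operation $r_k\leftarrow r_k-(a_{N+1}/a_N)\,r_{k+2}$ on $H(p,q)$ is tailored to the actual coefficient structure at shift~$N$; this should make it accessible to the Whitney framework of Theorem~\ref{th:whitney}. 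Specifically, augmenting $H(p,q)$ with an auxiliary column of the $(1,1,\dots,1,0,0,\dots)$ form appearing in Whitney's theorem---encoding the ratio $a_{N+1}/a_N$---and performing a positive diagonal rescaling of rows should reduce the modified operation to a finite sequence of Whitney-admissible consecutive row-differences on the augmented matrix. Consequently, the analogue of the signed sum above with $1/R$ replaced by $a_{N+1}/a_N$ is nonnegative for every sufficiently large~$N$. Passing $N\to\infty$ and using continuity of the finite determinant then transfers nonnegativity to the original minor of $H(p_1,q_1)$.

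The principal obstacle is the concrete realization of the non-unit, row-skipping operation $r_k\leftarrow r_k-(1/R)\,r_{k+2}$ within Whitney's framework, which formally handles only consecutive row-differences with unit coefficient. The difficulty is compounded by the interleaved $a/b$ row structure of $H(p,q)$: row $k+1$ sits between rows $k$ and $k+2$ and carries coefficients of the opposite type, so a direct application of Whitney would mix the two types. Designing the auxiliary column so that Whitney applies cleanly to the augmented matrix, and so that the resulting consecutive differences collapse---after projection away from the auxiliary column and back-substitution of the diagonal rescaling---to the desired operation on $H(p,q)$, is the technical heart of the argument. Once this setup is in place, both the total nonnegativity for each finite~$N$ and the limit passage follow routinely from the continuity of the determinant and the monotone convergence of the coefficient ratios.
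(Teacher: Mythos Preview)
Your high-level strategy---approximate $1/R$ by coefficient ratios coming from far-away parts of $H(p,q)$, pass to the limit, and invoke Whitney's theorem---is precisely the paper's strategy, and your symmetry reduction of the case $r>0$ to the case $R<\infty$ via $z\mapsto 1/z$ is correct (the paper simply repeats the argument for $r>0$, so your shortcut is a minor improvement).

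The gap is in the orientation. You set things up with \emph{row} operations $r_k\mapsto r_k-\tfrac1R\,r_{k+2}$ and try to realise them via Whitney after adjoining an auxiliary \emph{column}. This runs into the $a/b$ interleaving obstacle you yourself flag, and the proposed resolution does not work: any column of $H(p,q)$, or any limit of normalised far columns, has interleaved entries $(\dots,a_j,b_j,a_{j-1},b_{j-1},\dots)$, so after row-rescaling it to $(1,1,\dots,1)^{\mathsf T}$ the consecutive row differences Whitney produces are $b_j/b_N-a_j/a_N$ and $a_{j-1}/a_{N-1}-b_j/b_N$, which mix the two coefficient types rather than yielding $a_j-\tfrac1R a_{j-1}$ and $b_j-\tfrac1R b_{j-1}$. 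Nor can the skip-one row operation be built from a chain of consecutive Whitney differences: composing $r_{k+1}\mapsto r_{k+1}-c\,r_{k+2}$ with $r_k\mapsto r_k-c'\,r_{k+1}$ gives $r_k-c'r_{k+1}+cc'r_{k+2}$, with the wrong sign on $r_{k+2}$ and an unwanted $r_{k+1}$ term.

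The paper avoids all of this by switching to \emph{column} operations. It adjoins an auxiliary \emph{row}---a far $a$-row $(a_m,a_{m+1},\dots)$ of $H(p,q)$ divided by $a_m$---and lets $m\to\infty$, so that the extra row becomes the geometric row $(1,R^{-1},R^{-2},\dots)$. After a positive column rescaling this row reads $(1,1,\dots,1)$, and Whitney applied to the transpose turns consecutive \emph{column} differences into exactly $\text{col}_j\mapsto \text{col}_j-\tfrac1R\,\text{col}_{j-1}$. Column operations act uniformly on every row, so the $a/b$ interleaving is irrelevant; the auxiliary row collapses to $(1,0,\dots,0)$ while all remaining rows become precisely the coefficient rows of $p_1$ and $q_1$. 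With this one change of orientation, the rest of your plan---total nonnegativity of the limit matrix, then Whitney, then reading off an arbitrary submatrix of $H(p_1,q_1)$---goes through verbatim.
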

\begin{proof}
    Suppose that~$R<\infty$ and consider an arbitrary submatrix of~$H(p,q)$ of the following
    form:
    \[
    \begin{pmatrix}
        a_{m}&a_{m+1}&\hdots&a_{m+2n-1}&a_{m+2n}\\
        a_{0}&a_{1}&\hdots&a_{2n-1}&a_{2n}\\
        b_{0}&b_{1}&\hdots&b_{2n-1}&b_{2n}\\
        a_{-1}&a_{0}&\hdots&a_{2n-2}&a_{2n-1}\\
        b_{-1}&b_{0}&\hdots&b_{2n-2}&b_{2n-1}\\
        \vdots&\vdots&\ddots&\vdots&\vdots\\
        b_{-n+1}&b_{-n+2}&\hdots&b_{n}&b_{n+1}
    \end{pmatrix},
    \]
    \setstretch{1.225}%
    where~$m,n\in\mathbb{Z}_{>0}$. This matrix is totally nonnegative, and $a_m\ne0$, $b_m\ne0$
    when~$m$ is big enough (see Lemma~\ref{lemma:ann_convergence_term}). Divide the first row
    by~$a_{m}$ and let~$m\to+\infty$. Then Lemma~\ref{lemma:ann_convergence} gives the relation
    \[
    \frac{a_{m+n}}{a_{m}}
    =\frac{a_{m+1}}{a_{m}}\cdot\frac{a_{m+2}}{a_{m+1}}\cdots\frac{a_{m+n}}{a_{m+n-1}}
    \to R^{-n}
    \]
    implying that
    \[
    \begin{pmatrix}
        1&\frac{a_{m+1}}{a_{m}}&\hdots&\frac{a_{m+2n-1}}{a_{m}}&\frac{a_{m+2n}}{a_{m}}\\
        a_0&a_{1}&\hdots&a_{2n-1}&a_{2n}\\
        b_0&b_{1}&\hdots&b_{2n-1}&b_{2n}\\
        a_{-1}&a_0&\hdots&a_{2n-2}&a_{2n-1}\\
        b_{-1}&b_0&\hdots&b_{2n-2}&b_{2n-1}\\
        \vdots&\vdots&\ddots&\vdots&\vdots\\
        b_{-n+1}&b_{-n+2}&\hdots&b_{n}&b_{n+1}
    \end{pmatrix}
    \to
    \begin{pmatrix}
        1&R^{-1}&\hdots&R^{-2n+1}&R^{-2n}\\
        a_0&a_{1}&\hdots&a_{2n-1}&a_{2n}\\
        b_0&b_{1}&\hdots&b_{2n-1}&b_{2n}\\
        a_{-1}&a_0&\hdots&a_{2n-2}&a_{2n-1}\\
        b_{-1}&b_0&\hdots&b_{2n-2}&b_{2n-1}\\
        \vdots&\vdots&\ddots&\vdots&\vdots\\
        b_{-n+1}&b_{-n+2}&\hdots&b_{n}&b_{n+1}
    \end{pmatrix}\eqqcolon M,
    \]
    where the convergence is entry-wise. The matrix~$M$ is totally nonnegative as an entry-wise
    limit of totally nonnegative matrices. Subtracting columns in~$M$ then gives
    \[
    M_1\coloneqq
    \begin{pmatrix}
        1&0&\hdots&0&0\\
        a_0&a_{1}-a_0R^{-1}
               &\hdots&a_{2n-1}-a_{2n-2}R^{-1}&a_{2n}-a_{2n-1}R^{-1}\\
        b_0&b_{1}-b_0R^{-1}
               &\hdots&b_{2n-1}-b_{2n-2}R^{-1}&b_{2n}-b_{2n-1}R^{-1}\\
        a_{-1}&a_0-a_{-1}R^{-1}
               &\hdots&a_{2n-2}-a_{2n-3}R^{-1}&a_{2n-1}-a_{2n-2}R^{-1}\\
        b_{-1}&b_0-b_{-1}R^{-1}
               &\hdots&b_{2n-2}-b_{2n-3}R^{-1}&b_{2n-1}-b_{2n-2}R^{-1}\\
        \vdots&\vdots&\ddots&\vdots&\vdots\\
        b_{-n+1}&b_{-n+2}-b_{-n+1}R^{-1}
               &\hdots&b_{n-2}-b_{n-3}R^{-1}&b_{n-1}-b_{n-2}R^{-1}
    \end{pmatrix}.
    \]
    If the matrix~$M$ is totally nonnegative, then by Theorem~\ref{th:whitney} the matrix~$M_1$
    is also totally nonnegative%
    \footnote{We apply Theorem~\ref{th:whitney} to the transpose of the matrices~$M$ and~$M_1$;
        then we use the fact that the transposition does not affect the total nonnegativity.} %
    . Now note that
    \[
    \begin{aligned}
        p_1(z)&=\left(1-\frac{z}{R}\right)\sum_{m=-\infty}^\infty a_{m}z^m
               = \sum_{m=-\infty}^\infty a_{m+1}z^{m+1} - \sum_{m=-\infty}^\infty \frac{a_{m}}{R}z^{m+1}
               = \sum_{m=-\infty}^\infty \big(a_{m+1}-a_{m}R^{-1}\big)z^{m+1}
               \\
               \text{and}\quad
        q_1(z)&=\left(1-\frac{z}{R}\right)\sum_{m=-\infty}^\infty b_{m}z^m
               =\sum_{m=-\infty}^\infty (b_{m+1}-b_{m}R^{-1})z^{m+1},
    \end{aligned}
    \]
    so any submatrix of~$H(p_1,q_1)$ has only nonnegative minors, and hence $H(p_1,q_1)$ is
    itself totally nonnegative.

    Analogously, suppose that~$r>0$. Then Lemmata~\ref{lemma:ann_convergence_term}
    and~\ref{lemma:ann_convergence} imply~$\frac{b_{m}}{b_{m+n}}\to r^{n}$ as $m\to-\infty$, and
    therefore
    \[
    \begin{pmatrix}
        a_{0}&a_{1}&\hdots&a_{2n-1}&a_{2n}\\
        b_{0}&b_{1}&\hdots&b_{2n-1}&b_{2n}\\
        a_{-1}&a_{0}&\hdots&a_{2n-2}&a_{2n-1}\\
        b_{-1}&b_{0}&\hdots&b_{2n-2}&b_{2n-1}\\
        \vdots&\vdots&\ddots&\vdots&\vdots\\
        b_{-n+1}&b_{-n+2}&\hdots&b_{n}&b_{n+1}\\
        \frac{b_{m}}{b_{m+2n}}&\frac{b_{m+1}}{b_{m+2n}}&\hdots&\frac{b_{m+2n-1}}{b_{m+2n}}&1
    \end{pmatrix}
    \to
    \begin{pmatrix}
        a_{0}&a_{1}&\hdots&a_{2n-1}&a_{2n}\\
        b_{0}&b_{1}&\hdots&b_{2n-1}&b_{2n}\\
        a_{-1}&a_{0}&\hdots&a_{2n-2}&a_{2n-1}\\
        b_{-1}&b_{0}&\hdots&b_{2n-2}&b_{2n-1}\\
        \vdots&\vdots&\ddots&\vdots&\vdots\\
        b_{-n+1}&b_{-n+2}&\hdots&b_{n}&b_{n+1}\\
        r^{2n}&r^{2n-1}&\hdots&r&1\\
    \end{pmatrix},
    \]
    \setstretch{1.2}%
    where the convergence is entry-wise. Subtracting columns gives
    \[
    M_2\coloneqq
    \begin{pmatrix}
        a_0-ra_{1}&a_{1}-ra_{2}&\hdots&a_{2n-1}-ra_{2n}&a_{2n}\\
        b_0-rb_{1}&b_{1}-rb_{2}&\hdots&b_{2n-1}-rb_{2n}&b_{2n}\\
        a_{-1}-ra_0&a_0-ra_{1}&\hdots&a_{2n-2}-ra_{2n-1}&a_{2n-1}\\
        b_{-1}-rb_0&b_0-rb_{1}&\hdots&b_{2n-2}-rb_{2n-1}&b_{2n-1}\\
        \vdots&\vdots&\ddots&\vdots&\vdots\\
        b_{-n+1}-rb_{-n+2}&b_{-n+2}-rb_{-n+3}&\hdots&b_{n}-rb_{n+1}&b_{n+1}\\
        0&0&\hdots&0&1\\
    \end{pmatrix}.
    \]
    Since the matrix~$H(p,q)$ is totally nonnegative, the matrix~$M_2$ is also totally
    nonnegative by Theorem~\ref{th:whitney} and Lemma~\ref{lemma:TNN_order}. Crossing out the
    last column and row from~$M_2$ gives a submatrix of~$H(p_2,q_2)$, because
    \[
    p_2(z)\coloneqq \left(1-\frac{r}{z}\right)p(z)=\sum_{k=-\infty}^{+\infty}(a_k-ra_{k+1})z^k
    \an
    q_2(z)\coloneqq \left(1-\frac{r}{z}\right)q(z)=\sum_{k=-\infty}^{+\infty}(b_k-rb_{k+1})z^k.
    \]
    The integer~$n\ge 0$ is arbitrary, and thus the whole matrix of~$H(p_2,q_2)$ is totally
    nonnegative.
\end{proof}
\setstretch{1.2}%
\begin{lemma}
    \label{lemma:comm_poles}
    If~$H(p,q)$ is totally nonnegative and has a nonzero minor of order~$\ge 2$, then the
    series~$p(z)$ and~$q(z)$ can be represented as~$p(z)=p_*(z)g(z)$ and~$q(z)=q_*(z) g(z)$,
    respectively. Here $g(z)$ denotes a function of the form~\eqref{eq:funct_gen_dtps} and the
    matrix~$H(p_*,q_*)$ is totally nonnegative. Moreover, both~$p_*(z)$
    and~$q_*(z)$ can be represented as the products
    \begin{equation}\label{eq:pq_form_0}
        \begin{aligned}
            p_*(z)&=C_1 z^je^{Az+\frac{A_0}z} \prod_{\nu>0} \left(1+\frac{z}{\alpha_\nu}\right)
            \prod_{\nu<0} \left(1+\frac{z^{-1}}{\alpha_\nu}\right)
            \an\\
            q_*(z)&=C_2 z^ke^{Az+\frac{A_0}z} \prod_{\nu>0} \left(1+\frac{z}{\beta_\nu}\right)
            \prod_{\nu<0} \left(1+\frac{z^{-1}}{\beta_\nu}\right),
        \end{aligned}
    \end{equation}
    where $C_1,C_2,A,A_0\ge 0$, the exponents~$j,k\in \mathbb{Z}_{\ge0}$ and~$\alpha_\nu,\beta_\nu>0$
    for all~$\nu\in\mathbb{Z}_{\ne0}$.
\end{lemma}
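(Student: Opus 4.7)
The plan is to iteratively strip the pole factors at the boundary of the annulus of convergence via Lemma~\ref{lemma:remove_pole}, then take a limit and apply Theorem~\ref{th:E-AESW} to what remains. By Lemma~\ref{lemma:ann_convergence}, $p(z)$ and $q(z)$ share a nonempty annulus of convergence $r<|z|<R$. First I would handle the outer boundary: if $R<+\infty$, apply Lemma~\ref{lemma:remove_pole} to obtain $p^{(1)}=(1-z/R)p$ and $q^{(1)}=(1-z/R)q$ with $H(p^{(1)},q^{(1)})$ still totally nonnegative. Iterating (letting $R_k$ denote the outer radius of the $k$-th pair) yields a nondecreasing sequence $R_0\le R_1\le R_2\le\cdots\le+\infty$ together with the partial products $\prod_{k<N}(1-z/R_k)$. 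A symmetric procedure at the inner boundary produces a nonincreasing sequence $r_0\ge r_1\ge\cdots\ge 0$ and the partial products $\prod_{k<N}(1-r_k/z)$.

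Once I know these two infinite products converge locally uniformly to functions $G_+(z)$ and $G_-(z)$, I would set $g(z)=1/\bigl(G_+(z)G_-(z)\bigr)$, $p_*(z)=p(z)G_+(z)G_-(z)$, and $q_*(z)=q(z)G_+(z)G_-(z)$; then $p=p_*g$ and $q=q_*g$, and $H(p_*,q_*)$ is totally nonnegative as an entry-wise limit of the totally nonnegative matrices $H(p^{(k)},q^{(k)})$. By construction $p_*$ and $q_*$ converge on all of $\mathbb{C}\setminus\{0\}$. Since $T(p_*)$ and $T(q_*)$ embed as submatrices of $H(p_*,q_*)$, each is a totally nonnegative doubly infinite Toeplitz matrix; Theorem~\ref{th:E-AESW} therefore gives each a representation of the form~\eqref{eq:funct_gen_dtps}, and the absence of pole factors (forced by convergence on all of $\mathbb{C}\setminus\{0\}$) reduces both representations to the shape~\eqref{eq:pq_form_0}. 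The shared constants $A$ and $A_0$ in the exponentials of $p_*$ and $q_*$ would follow from the nonnegativity of the mixed $2\times 2$ cross-minors $a_kb_{k+1}-a_{k+1}b_k$ in $H(p_*,q_*)$, which couple the coefficient asymptotics of the two series and force their exponential growth rates to coincide; then $g=p/p_*$ inherits the form~\eqref{eq:funct_gen_dtps} automatically.

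The principal obstacle is proving that the canonical products $G_\pm$ converge, i.e.\ that $\sum_k 1/R_k<+\infty$ and $\sum_k r_k<+\infty$ (cf.\ Theorem~\ref{th:canonical_product}). I would extract this summability from telescoping estimates on carefully chosen principal minors of the intermediate matrices $H(p^{(k)},q^{(k)})$: each stripping decreases a specific leading coefficient ratio by an amount comparable to $1/R_k$, while the uniform nonnegativity of every minor of the original $H(p,q)$ caps the cumulative total. A secondary subtlety is ensuring that the sequence of radii actually escapes to $+\infty$ (respectively to $0$) rather than stabilising at a finite positive value; here I would invoke Lemma~\ref{lemma:ann_tnn_zero_minor} to exclude the degenerate scenario in which the stripped series would accumulate a non-removable singularity at the common boundary.
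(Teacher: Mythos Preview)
Your overall strategy---iteratively stripping boundary poles via Lemma~\ref{lemma:remove_pole}, passing to a limit, and then invoking Theorem~\ref{th:E-AESW}---matches the paper's, but you have placed Edrei's theorem at the wrong end of the argument, and this creates the very obstacle you flag. The paper applies Theorem~\ref{th:E-AESW} \emph{first}, to~$p$ and~$q$ separately (since~$T(p)$ and~$T(q)$ sit inside~$H(p,q)$ and are therefore already totally nonnegative). This immediately yields representations of the form~\eqref{eq:funct_gen_dtps} for both series, and with them the absolute convergence~$\sum_\nu \delta_\nu^{-1}<\infty$ of the pole sums. One then enumerates only the \emph{common} poles~$\gamma_i$ of~$p$ and~$q$ and strips them; the convergence of~$\prod_i E_i^{m_i}$ is now automatic, so your ``principal obstacle'' evaporates and no telescoping minor estimate is needed. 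The same reordering also dissolves your secondary subtlety: after removing the common pole~$\gamma_0$ to its common order~$m_0$, at least one of~$p_0,q_0$ is regular at~$\gamma_0$ by construction, and then Lemma~\ref{lemma:ann_convergence} forces the other to be regular there too---so the pole orders of~$p$ and~$q$ at every~$\gamma_i$ in fact coincide, and the annulus genuinely widens at each step.

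Your sketch for the exponential matching~$A=B$, $A_0=B_0$ is too thin. The $2\times2$ cross-minors do give the pointwise inequalities~$a_{n-1}b_{k+1}/a_k\ge b_n\ge a_nb_k/a_k$ for~$n>k$ (and the reversed ones for~$n\le k$), but what one actually needs is to sum these into bounds of the form~$\tfrac{b_k}{a_k}\le \tfrac{q(x)}{p(x)}+o(1)\le x\tfrac{b_{k+1}}{a_k}$ as~$x\to+\infty$ (and analogous bounds as~$x\to 0^+$), and then to invoke the growth estimates of Theorem~\ref{th:canonical_product} to see that the canonical products cannot compete with a residual factor~$e^{(B-A)x}$ or~$e^{(B_0-A_0)/x}$. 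Coefficient asymptotics alone do not pin down the exponential constants; it is the comparison of~$q(x)/p(x)$ against the sub-exponential growth of the Weierstrass products that forces~$A=B$ and~$A_0=B_0$.
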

\begin{proof}%[Proof of Lemma~\ref{lemma:comm_poles}]
    If one of the series is trivial, then this theorem is equivalent to Theorem~\ref{th:E-AESW};
    therefore, we suppose below in this proof that both~$p(z)$ and~$q(z)$ are not trivial.
    
    By Lemma~\ref{lemma:ann_convergence}, $p(z)$ and~$q(z)$ converge in the same
    annulus~$r<|z|<R$, that is $\left||z|-\rho\right|<(R-r)/2$, where~$\rho\coloneqq(R+r)/2$. The
    annulus is not empty, because there exists a nonzero minor of~$H(p,q)$ of order~$\ge 2$.
    Since the matrices~$T(p)$ and~$T(q)$ are totally nonnegative as submatrices of~$H(p,q)$,
    both series~$p(z)$ and~$q(z)$ converge to functions of the form~\eqref{eq:funct_gen_dtps} by
    Theorem~\ref{th:E-AESW}. In particular, the poles of these functions are positive and can
    condense only at~$z=0$ or infinity. Let us enumerate all their \emph{common} poles
    as~$\gamma_0,\gamma_1,\dots,\gamma_N$ excluding a possible pole at the origin, so that
    \[
    1<\max\left\{\frac{\rho}{\gamma_i},\frac{\gamma_i}{\rho}\right\}%
    \le%
    \max\left\{\frac{\rho}{\gamma_{i+1}},\frac{\gamma_{i+1}}{\rho}\right\}\]
    for all~$i<N\le\infty$ and each pole occurs only once. Denote
    \[
    p_n(z)\colonequals p(z)\prod_{i=0}^n E_i^{m_i}(z)
    \an
    q_n(z)\colonequals q(z)\prod_{i=0}^n E_i^{m_i}(z),
    \ww
    E_i(z)\colonequals
    \begin{cases}
        1-\frac{z}{\gamma_i}&\text{if }\gamma_i>\rho,\\
        1-\frac{\gamma_i}{z}&\text{if }\gamma_i<\rho
    \end{cases}
    \]
    and~$m_i$ stands for the order%
    \footnote{In other words, $m_i$ is the minimal number, such that at least one of the
        functions $p(z)E_i^{m_i}(z)$ and $q(z)E_i^{m_i}(z)$ is regular at the
        point~$z=\gamma_i$.} %
    of the pole~$\gamma_i$.

    \setstretch{1.25}%
    Let~$n=0$. On the one hand, both~$p_0(z)$ and~$q_0(z)$ represent functions of the
    form~\eqref{eq:funct_gen_dtps}, at least one of which has no pole at~$\gamma_0\in\{r,R\}$.
    On the other hand, these series converge in the same annulus by
    Lemma~\ref{lemma:remove_pole} since the matrix~$H(p_0,q_0)$ is totally nonnegative.
    Consequently, neither of~$p_0(z)$ and~$q_0(z)$ has a pole at~$\gamma_0$, and $r<|z|<R$ is
    strictly nested in the annulus of convergence of these series. By induction
    on~$n=0,1,\dots,N$ we obtain that the matrix~$H(p_n,q_n)$ is totally nonnegative for
    each~$n$, and hence the orders of the pole~$\gamma_n$ of~$p(z)$ and of~$q(z)$ coincide.

    Both~$p(z)$ and~$q(z)$ can be represented as in~\eqref{eq:funct_gen_dtps}, so the
    product~$\prod_{i=0}^N E_i^{m_i}(z)$ converges in~$\mathbb C\setminus\{0\}$ locally
    uniformly when~$N=\infty$.
    Therefore,~$p_n(z)\to p_*(z)\colonequals p(z)\prod_{i=0}^N E_i^{m_i}(z)$
    and~$q_n(z)\to q_*(z)\colonequals q(z)\prod_{i=0}^N E_i^{m_i}(z)$, and the Laurent
    coefficients converge as well. Moreover, both functions~$p_*(z)$ and~$q_*(z)$ are
    holomorphic in~$\mathbb{C}\setminus\{0\}$: they have the form
    \begin{equation}\label{eq:pq_form0}
        \begin{aligned}
            p_*(z)&= C_1z^je^{Az+\frac{A_0}z}\cdot
                \prod_{\nu>0} \left(1+\frac{z}{\alpha_\nu}\right)
                \prod_{\nu<0} \left(1+\frac{z^{-1}}{\alpha_\nu}\right)
            \an\\
            q_*(z)&= C_2z^ke^{Bz+\frac{B_0}z}\cdot
                \prod_{\mu>0} \left(1+\frac{z}{\beta_\mu}\right)
                \prod_{\mu<0} \left(1+\frac{z^{-1}}{\beta_\mu}\right),
        \end{aligned}
    \end{equation}
    where $A,A_0,B,B_0,C_1,C_2\ge 0$; \ $j,k\in \mathbb{Z}$ and~$\alpha_\nu,\beta_\mu>0$ for
    all~$\nu,\mu$. The corresponding Hurwitz-type matrix~$H(p_*,q_*)$ is totally nonnegative as
    the entry-wise limit of the matrices~$H(p_n,q_n)$ as~$n\to N$. If~$p_*(z)$ is a
    Laurent polynomial, then~$q_*(z)$ is also a Laurent polynomial by
    Lemma~\ref{lemma:ann_convergence_term} and both~$p_*(z)$ and~$q_*(z)$ have
    only negative zeros by Theorem~\ref{th:E-AESW}; in this case the assertion of the lemma
    holds true with~$g(z)=\prod_{i=0}^NE_i^{-m_i}(z)$, so below we suppose
    that~$p_*(z)$ has infinitely many nonzero coefficients.

    To keep the notation, assume that~$p(z)=p_*(z)$ and~$q(z)=q_*(z)$, so that
    both series converge in~$\mathbb{C}\setminus\{0\}$. According to
    Lemma~\ref{lemma:ann_convergence_term}, we can fix an integer~$k$ such
    that~$a_{k-1},a_k,a_{k+1}\ne 0$, and thus~$b_k,b_{k+1}\ne 0$. The conditions
    \[
    0<p(x)=\max_{|z|=x}|p(x)|<\infty
    \an
    0<q(x)=\max_{|z|=x}|q(z)|<\infty
    \]
    hold for~$x>0$ since both series~$p(x)$ and~$q(x)$ have only nonnegative coefficients. From
    the total nonnegativity of~$H(p,q)$ we obtain the
    inequalities~$a_{n-1}\frac{b_{k+1}}{a_k}\ge b_n$ and~$b_n\ge a_n\frac{b_k}{a_k}$ for
    all~$n>k$, which are giving
    \[
    x\frac{b_{k+1}}{a_k}\sum_{n=k}^\infty a_nx^n
    \ge \sum_{n=k+1}^\infty b_nx^n
    \ge \frac{b_k}{a_k}\sum_{n=k+1}^\infty a_nx^n
    \quad\text{for}\quad x>0.
    \]
    Analogously, the inequalities~$a_n\frac{b_k}{a_k}\ge b_n$
    and~$b_n\ge a_{n-1}\frac{b_{k+1}}{a_k}$ for~$n\le k$ give
    \[
    \frac{b_k}{a_k}\sum_{n=-\infty}^{k} a_nx^n
    \ge \sum_{n=-\infty}^{k} b_nx^n
    \ge x\frac{b_{k+1}}{a_k}\sum_{n=-\infty}^{k-1} a_nx^n
    \quad\text{for}\quad x>0.
    \]
    Summing up then yields
    \[
    \begin{aligned}
        x\frac{b_{k+1}}{a_k}p(x) &\ge q(x) + O(x^{k}) \ge\frac{b_k}{a_k}p(x) &\text{and}&&
        \frac{p(x)}{x^k}&\ge a_{k+1}x\to+\infty&\text{as}&& x&\to+\infty,
        \\
        x\frac{b_{k+1}}{a_k}p(x)&\le q(x)+O(x^{k+1}) \le\frac{b_k}{a_k}p(x) &\text{and}&&
        \frac{xp(x)}{x^{k+1}}&\ge \frac{a_{k-1}}x\to+\infty &\text{as}&& x&\to+0.
    \end{aligned}
    \]
    In other words, the big-O terms are neglectable and we can write
    \begin{equation}\label{eq:qp_lim_x_inf}
    \varlimsup_{x\to+\infty}\frac{q(x)}{xp(x)}=
    \varlimsup_{x\to+\infty}
    \frac{C_2x^ke^{Bz+\frac{B_0}x}\cdot
        \prod_{\mu>0} \left(1+\frac{x}{\beta_\mu}\right)
        \prod_{\mu<0} \left(1+\frac{x^{-1}}{\beta_\mu}\right)}
    {C_1x^{j+1}e^{Ax+\frac{A_0}x}\cdot
        \prod_{\nu>0} \left(1+\frac{x}{\alpha_\nu}\right)
        \prod_{\nu<0} \left(1+\frac{x^{-1}}{\alpha_\nu}\right)}
    \le\frac{b_{k+1}}{a_k}
    \an
    \varliminf_{x\to+\infty}\frac{q(x)}{p(x)}\ge\frac{b_k}{a_k}
    \end{equation}
    \setstretch{1.2}%
    with positive~$A,B,A_0,B_0$. Recall that~$\frac{b_k}{a_k}>0$, which implies~$B=A$ since
    otherwise the dominant infinite products~$\prod_{\mu>0} \left(1+\frac{x}{\beta_\mu}\right)$
    and~$\prod_{\nu>0} \left(1+\frac{x}{\alpha_\nu}\right)$ are neglectable with respect to the
    exponential term~$e^{(B-A)x}$ as~$x\to\infty$ by Theorem~\ref{th:canonical_product}.
    Analogously,
    \begin{equation}\label{eq:qp_lim_x_0}
    \varliminf_{x\to+0}\frac{q(x)}{p(x)}=
    \varliminf_{x\to+0}
    \frac{C_2x^ke^{\frac{B_0}x}\cdot
        \prod_{\mu>0} \left(1+\frac{x}{\beta_\mu}\right)
        \prod_{\mu<0} \left(1+\frac{x^{-1}}{\beta_\mu}\right)}
    {C_1z^{j}e^{\frac{A_0}z}\cdot
        \prod_{\nu>0} \left(1+\frac{x}{\alpha_\nu}\right)
        \prod_{\nu<0} \left(1+\frac{x^{-1}}{\alpha_\nu}\right)}
    \le\frac{b_k}{a_k}
    \an
    \varlimsup_{x\to+0}\frac{q(x)}{xp(x)}\ge\frac{b_{k+1}}{a_k}
    \end{equation}
    which implies~$B_0=A_0$ due to~$\frac{b_{k+1}}{a_k}>0$. Therefore, to finally prove the
    lemma it is enough to take~\(g(z)=\prod_{i=0}^NE_i^{-m_i}(z)\).
\end{proof}
\setstretch{1.2}%

\section{\texorpdfstring{$\mathcal{S}$}{S}-functions}
\begin{lemma}\label{lemma:product_is_S}
    The product
    \begin{equation}\label{eq:order_alphas_betas}
        \begin{gathered}
            C \frac{\prod_{\mu>0} \left(1+\dfrac{z}{\beta_\mu}\right)}
            {\prod_{\nu>0} \left(1+\dfrac{z}{\alpha_\nu}\right)}
            \frac{\prod_{\mu<0}\left(1+\dfrac{z^{-1}}{\beta_\mu}\right)}
            {\prod_{\nu<0}\left(1+\dfrac{z^{-1}}{\alpha_\nu}\right)},
            \quad\text{where $C>0$, and the numbers}
            \\
            0<\cdots<\alpha_{-2}^{-1}<\beta_{-1}^{-1}<\alpha_{-1}^{-1}
            <\beta_{1}<\alpha_{1}<\beta_{2}<\alpha_2<\cdots
        \end{gathered}
    \end{equation}
    satisfy~$\sum_{\nu\ne 0} \big(\alpha_\nu^{-1} + \beta_\nu^{-1}\big) <\infty$, determines
    an~$\mathcal S$-function. Analogously,
    \begin{equation}\label{eq:order_alphas_betas_mer}
        C
        \frac{z+\beta_0}{z+\alpha_0}\cdot
        \frac{\prod_{\mu>0} \left(1+\dfrac{z}{\beta_\mu}\right)}
        {\prod_{\nu>0} \left(1+\dfrac{z}{\alpha_\nu}\right)}
        ,
    \end{equation}
    \setstretch{1.2}%
    where~$C\ge 0$ and the numbers~$0\le\beta_{0}<\alpha_{0}<\beta_{1}<\alpha_{1}<\cdots$
    satisfy~$\sum_{\nu>0} \big(\alpha_\nu^{-1} + \beta_\nu^{-1}\big) <\infty$ is a
    meromorphic $\mathcal S$\nobreakdash-function. Products over~$\mu$ and~$\nu$
    in~\eqref{eq:order_alphas_betas} or~\eqref{eq:order_alphas_betas_mer} can be terminating, in
    which case the numerator and the denominator retain to have interlacing zeros.
    % Each entire $\mathcal{S}$-function has the form~$C(z+\beta_0)$.
\end{lemma}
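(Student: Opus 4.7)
My plan is to reduce the statement to the case of finite truncated products and then pass to the infinite limit.  I would first rewrite each $\mu>0$ factor in~\eqref{eq:order_alphas_betas} as
$\frac{1+z/\beta_\mu}{1+z/\alpha_\mu}=\frac{\alpha_\mu}{\beta_\mu}\cdot\frac{z+\beta_\mu}{z+\alpha_\mu}$
and each $\mu<0$ factor as
$\frac{1+z^{-1}/\beta_\mu}{1+z^{-1}/\alpha_\mu}=\frac{z+\beta_\mu^{-1}}{z+\alpha_\mu^{-1}}$.
Collecting these, the truncated product $F_N$ (keeping only indices with $0<|\mu|,|\nu|\le N$) becomes a positive constant times a ratio of two monic polynomials of the same degree, whose zeros and poles lie on the negative real axis and, by a direct transcription of the chain of inequalities in~\eqref{eq:order_alphas_betas}, alternate in the pattern ``pole, zero, pole, zero,~\ldots, pole, zero'' when listed from $-\infty$ towards~$0$.

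The core step is to verify that every such $F_N$ is an $\mathcal{S}$-function.  Since $F_N(\infty)$ is a finite positive number, $F_N$ admits the partial-fraction decomposition $F_N(z)=F_N(\infty)+\sum_\nu\frac{c_\nu}{z-\eta_\nu}$, where $\eta_\nu$ runs through the negative real poles.  Expressing the residue as $c_\nu=\prod_\xi(\eta_\nu-\xi)\big/\prod_{\eta\ne\eta_\nu}(\eta_\nu-\eta)$ with $\xi$ ranging over the zeros and $\eta$ over the remaining poles, the counts of negative factors in the numerator and denominator, read off from the alternation pattern above, differ by exactly one, forcing $c_\nu<0$ for every~$\nu$.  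Consequently, for $z\in\Cp$ we get $\Im F_N(z)=\Im z\cdot\sum_\nu\frac{|c_\nu|}{|z-\eta_\nu|^2}\ge 0$, and $F_N(x)>0$ for $x>0$ is immediate from the product form; so $F_N$ is an $\mathcal{S}$-function.

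It remains to pass to the limit.  By the hypothesis $\sum_{\nu\ne 0}(\alpha_\nu^{-1}+\beta_\nu^{-1})<\infty$ and Theorem~\ref{th:canonical_product}, the product in~\eqref{eq:order_alphas_betas} converges locally uniformly on $\mathbb{C}\setminus(-\infty,0]$, hence $F_N\to F$ locally uniformly there; the defining inequalities of an $\mathcal{S}$-function pass to this limit, and $F$ is not identically zero because none of the factors vanishes off the negative axis.  The meromorphic case~\eqref{eq:order_alphas_betas_mer} is treated identically: the extra factor $\frac{z+\beta_0}{z+\alpha_0}$ just prepends one more pole-zero pair at the smallest magnitudes without disturbing the alternation pattern, and the boundary possibility $\beta_0=0$ merely puts a simple zero of $F$ at the origin while preserving the half-plane argument.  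Terminating products are themselves special finite instances of $F_N$ and need no extra treatment.  The step I expect to be the most delicate is the residue-sign bookkeeping, since one has to handle uniformly the three possible locations of $\eta_\nu$ (namely $-\alpha_\mu$ for $\mu>0$, $-\alpha_\mu^{-1}$ for $\mu<0$, and $-\alpha_0$ in the meromorphic setting); all the other steps are essentially automatic.
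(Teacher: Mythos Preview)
Your proposal is correct and follows essentially the same route as the paper: truncate to finite products, expand in partial fractions whose residues have the right sign because of the interlacing, observe that each summand is an $\mathcal S$-function, and pass to the limit via the locally uniform convergence guaranteed by Theorem~\ref{th:canonical_product}. The only cosmetic difference is that the paper uses building blocks of the form $\dfrac{A_{\nu,n}\,z}{z+\alpha_\nu}$ with $A_{\nu,n}>0$ (each visibly an $\mathcal S$-function) instead of your $\dfrac{c_\nu}{z-\eta_\nu}$ with $c_\nu<0$, and simply asserts the sign of the residue rather than counting factors as you do.
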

\begin{proof}
    Suppose that~$F(z)$ has the form~\eqref{eq:order_alphas_betas} and denote
    \begin{equation}\label{eq:F_is_limit}
        F_n(z)\coloneqq C\frac{q_n(z)}{p_n(z)},\ww
        q_n(z)=\prod_{\nu=1}^{n} \left(1+\frac{z}{\beta_\nu}\right)
        \left( 1+\frac{z^{-1}}{\beta_{-\nu}}\right)
        ,\quad
        p_n(z)=\prod_{\nu=1}^{n} \left(1+\frac{z}{\alpha_\nu}\right)
        \left( 1+\frac{z^{-1}}{\alpha_{-\nu}}\right).
    \end{equation}
    Note that the
    product~$\prod_{\nu=-n}^{-1}\frac{\alpha_\nu}{\beta_\nu}
    =\prod_{\nu=-n}^{-1}\frac{\beta_\nu^{-1}}{\alpha_\nu^{-1}}<1$ is bounded. For
    each~$n\in\mathbb{Z}_{>0}$, the rational function
    \begin{equation}\label{eq:F_is_limit_ML}
        F_n(z)
        =
        C\cdot
        \prod_{\nu=-n}^{-1}\frac{\alpha_\nu}{\beta_\nu}
        +\sum_{\nu=1}^n\left(\frac{A_{\nu,n} z}{z+\alpha_\nu}
            +\frac{A_{-\nu,n} z}{z+\frac{1}{\alpha_{-\nu}}}
        \right),
        \ww
        A_{\nu,n}=\left.C\frac{q_n(z)}{z p'_n(z)}\right|_{z=-\alpha_\nu^{\sign\nu}}>0,
    \end{equation}
    is an~$\mathcal S$-function as each of its partial fractions is such. The
    condition~$\sum_{\nu\ne 0} \big(\alpha_\nu^{-1} + \beta_\nu^{-1}\big) <\infty$ implies the
    locally uniform convergence of each product in~\eqref{eq:order_alphas_betas} (see
    Theorem~\ref{th:canonical_product}) and, therefore, of the numerator~$q_n(z)$ and the
    denominator~$p_n(z)$ as~$n\to\infty$. Since the denominator is nonzero for~$z\not\le0$, the
    function~$F(z)$ is the limit of~$F_n(z)$ as~$n\to\infty$ uniform on compact subsets
    of~$\mathbb{C}\setminus(-\infty,0]$. Moreover,
    \[\Im F(z)\cdot\Im z=\lim_{n\to\infty}F_n(z)\cdot\Im z\ge 0;\]
    the inequality is strict outside the real line due to the maximum principle for the harmonic
    function~$\Im F(z)$.

    The assertion that the expression~\eqref{eq:order_alphas_betas_mer} represents an
    $\mathcal S$-function follows by omitting from~\eqref{eq:F_is_limit_ML} terms that
    correspond to absent poles.
\end{proof}
\begin{lemma}\label{lemma:prop_S}
    Given~$F(z)$ of the form~\eqref{eq:order_alphas_betas} or~\eqref{eq:order_alphas_betas_mer},
    the function~$z^pF(z)$ with any integer~$p\ne0$ cannot be a mapping of~$\mathbb{C}_+$ into
    itself provided that it is not equal identically to~$Cz$ or~$C$; the
    function~$\frac{z}{F(z)}$ is an $\mathcal{S}$-function of the
    form~\eqref{eq:order_alphas_betas} or~\eqref{eq:order_alphas_betas_mer}.
\end{lemma}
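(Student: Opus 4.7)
My plan is to treat the two assertions separately, starting with the statement about $z/F(z)$, which is essentially an algebraic bookkeeping exercise. Writing $F$ in the product form~\eqref{eq:order_alphas_betas}, inversion swaps the roles of the $\alpha$'s and $\beta$'s in every factor, and multiplication by $z$ contributes a simple zero at the origin. On the positive-$\mu$ side I absorb this zero by prepending a leading block $\tfrac{z+\tilde\beta_0}{z+\tilde\alpha_0}$ with $\tilde\beta_0=0$ and $\tilde\alpha_0=\beta_1$, and by shifting all remaining indices by one: $\tilde\beta_k=\alpha_k$, $\tilde\alpha_k=\beta_{k+1}$; an analogous reshuffling on the negative-$\mu$ side keeps that portion in the shape of~\eqref{eq:order_alphas_betas}. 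The new chain $0=\tilde\beta_0<\tilde\alpha_0<\tilde\beta_1<\tilde\alpha_1<\cdots$ is an immediate consequence of the original interlacing $\beta_1<\alpha_1<\beta_2<\cdots$, so Lemma~\ref{lemma:product_is_S} certifies $z/F(z)$ as an $\mathcal S$-function of the advertised form. When $F$ is already in the meromorphic form~\eqref{eq:order_alphas_betas_mer}, the argument is parallel: the extra factor $z$ either cancels an existing $\beta_0=0$ (returning us to~\eqref{eq:order_alphas_betas}) or creates a new one.

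For the first assertion I combine an asymptotic growth argument with a residue-sign argument. The product representation yields $F(x)\sim c_0 x^{k_0}$ as $x\to 0^+$ and $F(x)\sim c_\infty x^{k_\infty}$ as $x\to+\infty$ with positive constants $c_0,c_\infty$ and integer exponents $k_0,k_\infty\in\{0,1\}$ dictated by the termination of the interlacing chains. Any holomorphic mapping of $\mathbb{C}_+$ into itself grows at most linearly in every Stolz sector at infinity and admits at worst a simple pole on the real line (Herglotz--Nevanlinna representation), so the estimates $|z^pF(z)|\sim|z|^{p+k_\infty}$ near infinity and $|z|^{p+k_0}$ near the origin force $p+k_\infty\le 1$ and $p+k_0\ge -1$, excluding $|p|\ge 2$ at once.

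The remaining cases $p=\pm 1$ are closed by comparing residues at the poles of $F$ on $(-\infty,0)$. A direct sign analysis of the product formula using the interlacing (or equivalently the Nevanlinna representation of $F\in\mathcal{S}$ with a nonnegative measure) shows $\operatorname{Res}(F;-\alpha_\nu)<0$ at each pole $-\alpha_\nu<0$. The residue of $z^{\pm 1}F$ at the same point equals $(-\alpha_\nu)^{\pm 1}$ times this negative number, hence is strictly positive---contradicting the sign required of residues of self-maps of $\mathbb{C}_+$. If instead $F$ has no poles at all, the interlacing restrictions force $F$ to be either the constant $C$ (the announced exception, yielding $zF=Cz$) or the linear polynomial $C(1+z/\beta_1)$ with $\beta_1>0$; for the latter a direct calculation near $z=-\beta_1$ shows neither $zF$ nor $F/z$ preserves the upper half-plane. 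The main obstacle is the careful index matching required to verify the new interlacing in Part~2, together with the uniform handling of the various sub-cases (depending on $k_0,k_\infty$ and on whether $F$ has any poles) in Part~1.
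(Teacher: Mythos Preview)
Your treatment of $z/F(z)$ is essentially the paper's argument: both proceed by explicit relabelling of the interlacing factors, with only cosmetic differences in how the extra factor of~$z$ is absorbed (the paper pulls a factor $(1+\alpha_{-1}z)$ out of the $\nu<0$ block so as to stay in form~\eqref{eq:order_alphas_betas}, see~\eqref{eq:recipr_to_S}, whereas you create a leading $\tfrac{z+0}{z+\tilde\alpha_0}$ block of type~\eqref{eq:order_alphas_betas_mer}).

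Your first assertion, however, has a real gap. The asymptotic $F(x)\sim c_\infty x^{k_\infty}$ with an \emph{integer} exponent~$k_\infty\in\{0,1\}$ is unjustified when the products in~\eqref{eq:order_alphas_betas} are infinite: as $x\to+\infty$ each factor $(1+x/\beta_\nu)/(1+x/\alpha_\nu)\to\alpha_\nu/\beta_\nu>1$, and the infinite product $\prod_{\nu>0}\alpha_\nu/\beta_\nu$ can diverge at an arbitrary sub-power rate (for instance $\beta_\nu=\nu^2$, $\alpha_\nu=\nu^2+\nu$ makes $F(x)$ grow roughly like a power of~$\log x$). Even a two-sided Carath\'eodory estimate as in~\eqref{eq:caratheodory_cr} gives only $c_1\rho^{-1}\le|F(\rho e^{i\pi/6})|\le c_2\rho$, which combined with the same bound for~$z^pF$ rules out $|p|\ge 3$ but not $p=\pm2$. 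Since your residue argument is stated only for $p=\pm1$ (it in fact works for every \emph{odd}~$p$, the sign of $(-\alpha_\nu)^p$ being negative), the even values $p=\pm2$ remain open in your scheme.

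The paper's route avoids growth estimates entirely. The residue computation shows that if $z^pF$ maps~$\mathbb{C}_+$ into itself then $(-\alpha_\nu)^{p+1}A_\nu<0$, forcing~$p$ to be \emph{even}; and for any even $p\ne0$ the single test point $z_*=e^{i\pi/|p|}\in\mathbb{C}_+$ satisfies $z_*^{\,p}=-1$, whence $\Im\bigl(z_*^{\,p}F(z_*)\bigr)=-\Im F(z_*)<0$ because the non-constant $\mathcal S$-function~$F$ has strictly positive imaginary part on~$\mathbb{C}_+$. This two-line argument disposes of all $p\ne0$ at once and needs no asymptotic control of~$F$.
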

\begin{proof}%[Proof of Lemma~\ref{lemma:prop_S}]
    Let~$-\alpha_\nu$ be an arbitrary pole of~$F(z)$, then~\eqref{eq:F_is_limit_ML} implies
    that~$F(z)\sim \frac{-A_\nu\alpha_\nu}{z+\alpha_\nu}$ for~$z$ close enough to~$-\alpha_\nu$,
    where~$A_\nu=\lim_{n\to\infty}A_{\nu,n}>0$. If~$z^pF(z)$ also maps the upper half of the
    complex plane into itself, then~$(-\alpha_\nu)^{p+1}A_\nu<0$, and hence~$p\ne0$ must be an even
    number. Nevertheless, if~$z_*=\exp{\frac{\pi}{\left|p\right|} i}$, then~$\Im z_*>0$ and
    $\Im z_*^pF(z_*) = -\Im F(z_*)<0$.

    For the second part of the lemma, it is enough to note that the reciprocal of the
    product~\eqref{eq:order_alphas_betas} can be expressed as
    \begin{equation}\label{eq:recipr_to_S}
        \frac 1{F(z)}= C \frac
        {\prod_{\nu>0} \left(1+\frac{z}{\alpha_\nu}\right)}
        {\prod_{\mu>0} \left(1+\frac{z}{\beta_\mu}\right)}
        \frac{\prod_{\nu<0} \left(1+\frac{z^{-1}}{\alpha_\nu}\right)}
        {\prod_{\mu<0}\left(1+\frac{z^{-1}}{\beta_\mu}\right)}
        =
        \frac C{z\alpha_{-1}} \frac
        {\left(z\alpha_{-1}+1\right)\prod_{\nu>0} \left(1+\frac{z}{\alpha_\nu}\right)}
        {\prod_{\mu>0} \left(1+\frac{z}{\beta_\mu}\right)}
        \frac{\prod_{\nu<-1}\left(1+\frac{z^{-1}}{\alpha_\nu}\right)}
        {\prod_{\mu<0} \left(1+\frac{z^{-1}}{\beta_\mu}\right)},
    \end{equation}
    so the relabelling the~$\beta_\mu\mapsto \widetilde\alpha_\mu$ for all~$\mu\ne0$;
    $\alpha_{-1}\mapsto\widetilde\beta_1^{-1}$ and~$\alpha_\nu\mapsto \widetilde\beta_{\nu-1}$ for
    all~$\nu\notin\{0,1\}$ yields that~$\frac z{F(z)}$ has the
    form~\eqref{eq:order_alphas_betas}. An analogous reasoning works for the reciprocal
    of~\eqref{eq:order_alphas_betas_mer}.
\end{proof}
\begin{proof}[Proof of Lemma~\ref{lemma:prop_S1}]
    On account of Lemma~\ref{lemma:product_is_S}, it is enough to prove that the function~$F(z)$
    is an $\mathcal S$-function only if has the form~\eqref{eq:order_alphas_betas}
    or~\eqref{eq:order_alphas_betas_mer}.  Since~$F(z)$ is regular for~$z>0$, the poles
    of~$p(z)$ and~$q(z)$ coincide and have same orders. The Carath\'eodory inequality implies
    that the absolute value of a function~$\phi(z)$ mapping the upper half of the complex plane
    into itself satisfies (see e.g.~\cite[p.~71]{ChebMei})
    \begin{equation}\label{eq:caratheodory_cr}
        \frac{2}{c\rho}\le |\phi(\rho e^{i\frac {\pi}{6}})|\le 2c\rho,
    \end{equation}
    where~$c>0$ is an appropriate constant and~$\rho>1$. All infinite products
    in~\eqref{eq:funct_gen_dtps} cannot grow or decrease at an exponential rate in~$|z|$ (see
    Theorem~\ref{th:canonical_product}). Thus, if~$F(z)$ were able to have an exponential factor
    of the form~$e^{\pm Az}$ with~$A>0$, then
    necessarily~$|F(\rho e^{i\frac {\pi}{6}})| = |F(\rho\frac{\sqrt 3}{2} + i\frac \rho2)| \sim
    e^{\pm \frac{\sqrt 3}{2}A\rho}$
    as~$\rho\gg 1$, which contradicts~\eqref{eq:caratheodory_cr}. The
    function~$\frac{1}{F(1/z)}$ maps the upper half of the complex plane into itself; thus, it
    satisfies the inequality~\eqref{eq:caratheodory_cr}, which implies that an exponential
    factor of the form~$e^{\pm A_0/z}$ in~$F(z)$ with~$A_0>0$ is absent. Summing up, the
    exponential factors in~$p(z)$ and~$q(z)$ must be the same.

    Zeros and poles of~$\frac{q(z)}{p(z)}$ interlace, because all its poles are simple and the
    residues are negative. Unless~$F(z)$ is meromorphic, the order of zeros can be made as
    in~\eqref{eq:order_alphas_betas} by taking out some power of~$z$,
    cf.~\eqref{eq:recipr_to_S}. The resulting power of the factor~$z$ in the representation
    of~$F(z)$ then must be~$0$ by Lemma~\ref{lemma:prop_S}. The case of meromorphic~$F(z)$ is
    more straightforward: no poles of~$F(z)$ can appear between its maximal zero and the origin,
    otherwise~$F(0+)<0$ due to negativity of residues.
\end{proof}
\section{Total nonnegativity and interlacing zeros}
\begin{lemma}\label{lemma:T_via_HH}
    If the matrix~$H(p,q)$ is totally nonnegative and~$\widetilde p(z)\coloneqq zp(z)$, then for
    arbitrarily taken nonnegative numbers~$A$ and~$B$ both matrices~$T(Ap+Bq)$
    and~$T(Aq+B\widetilde p)$ are totally nonnegative.
\end{lemma}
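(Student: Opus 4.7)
The key observation is that $H(p,q)$ contains the rows of $T(p)$, $T(q)$ and $T(\widetilde p)$ as an interleaved subsequence of its own rows: comparing the displayed block forms, row $k$ of $T(p)$ coincides with row $2k-1$ of $H(p,q)$, row $k$ of $T(q)$ coincides with row $2k$ of $H(p,q)$, and, since the $n$-th Laurent coefficient of $\widetilde p=zp$ equals $a_{n-1}$, row $k$ of $T(\widetilde p)$ coincides with row $2k+1$ of $H(p,q)$.

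Consider any minor of $T(Ap+Bq)$ extracted from rows $i_1<\dots<i_n$ and columns $j_1<\dots<j_n$. Each of its rows is the nonnegative combination
\[
A\cdot\bigl(\text{row } 2i_s-1 \text{ of } H(p,q)\bigr)+B\cdot\bigl(\text{row } 2i_s \text{ of } H(p,q)\bigr)
\]
restricted to the chosen columns. Expanding by multilinearity of the determinant in each row writes this minor as a sum of $2^n$ terms indexed by subsets $S\subseteq\{1,\dots,n\}$, with nonnegative coefficient $A^{|S|}B^{n-|S|}$, in which the $s$-th row of the $S$-th summand is row $2i_s-1$ of $H(p,q)$ if $s\in S$ and row $2i_s$ otherwise.

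Because the integers satisfy $i_s<i_{s+1}$, any choice $r_s\in\{2i_s-1,2i_s\}$ obeys $r_s\le 2i_s<2i_{s+1}-1\le r_{s+1}$ and is therefore strictly increasing; each of the $2^n$ summands is thus a genuine minor of $H(p,q)$ and is nonnegative by hypothesis. A nonnegative combination of nonnegatives is nonnegative, so $T(Ap+Bq)$ is totally nonnegative. The argument for $T(Aq+B\widetilde p)$ is verbatim, only with $r_s$ chosen from $\{2i_s,2i_s+1\}$; the spacing $2i_s+1<2i_{s+1}$ still ensures strictly increasing row indices. There is no real obstacle beyond the indexing bookkeeping --- verifying the Toeplitz-to-Hurwitz row dictionary and checking that every row selection produced by the multilinear expansion remains strictly ordered.
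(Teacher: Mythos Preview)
Your argument is correct. The row dictionary you set up matches the paper's indexing convention, and the key inequality $2i_s\le 2i_s+1<2i_{s+1}-1\le 2i_{s+1}$ guarantees that every one of the $2^n$ row selections produced by multilinear expansion is strictly increasing, so each summand is an honest minor of $H(p,q)$.

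The paper's proof packages the same idea as a matrix identity: it writes $T(Ap+Bq)=H^{\mathsf T}(A,B)\,H(p,q)$ and $T(Aq+B\widetilde p)=H^{\mathsf T}(A,B)\,H(q,\widetilde p)$ (invoking Lemma~\ref{lemma:H_zpq_order} for the second factor), observes that $H^{\mathsf T}(A,B)$ is totally nonnegative, and applies the Cauchy--Binet formula. Since each row of $H^{\mathsf T}(A,B)$ has exactly the two nonzero entries $A,B$ in columns $2i-1,2i$, the Cauchy--Binet sum collapses to precisely your $2^n$-term multilinear expansion; the two proofs are therefore the same computation in different clothing. Your version is more elementary in that it avoids naming Cauchy--Binet and the auxiliary factorization, while the paper's version is more compact and makes the structural reason (a product of totally nonnegative matrices) explicit.
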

\begin{proof}
    Observe that
    \begin{equation}\label{eq:T_via_HH}
        T(Ap+Bq) = H^{\textsf{T}}(A,B)\,H(p,q)
        \an
        T(Aq+B\widetilde p) = H^{\textsf{T}}(A,B)\,H(q,\widetilde p),
    \end{equation}
    where the auxiliary totally nonnegative matrix~$H^{\textsf{T}}(A,B)$ is the transpose
    of~$H(A,B)$:
    \[
    H^{\textsf{T}}(A,B)=
    \begin{pmatrix}
        \ddots & \vdots & \vdots & \vdots & \vdots & \vdots & \vdots & \iddots\\
        \hdots & A & B & 0 & 0 & 0 & 0 & \hdots\\
        \hdots & 0 & 0 & A & B & 0 & 0 & \hdots\\
        \hdots & 0 & 0 & 0 & 0 & A & B & \hdots\\
        \iddots & \vdots & \vdots & \vdots & \vdots & \vdots & \vdots & \ddots
    \end{pmatrix}
    =\big(h_{ij}\big)_{i,j=-\infty}^{\infty},\ww
    h_{ij}=\begin{cases}
        A&\text{if } j=2i-1,\\
        B&\text{if } j=2i,\\
        0&\text{otherwise}.
    \end{cases}
    \]
    By Lemma~\ref{lemma:H_zpq_order}, the matrix~$H(q,\widetilde p)$ is totally nonnegative
    whenever~$H(p,q)$ is totally nonnegative. Therefore, applying the Cauchy-Binet formula to
    the expressions~\eqref{eq:T_via_HH} yields that all minors of the matrices~$T(Ap+Bq)$
    and~$T(Aq+B\widetilde p)$ must be nonnegative.
\end{proof}

\begin{lemma}\label{lemma:H_TNN_F_has_form}
    If~$H(p,q)$ is totally nonnegative and has a nonzero minor of order~$\ge 2$ and,
    additionally,~$p(z)\not\equiv 0$, then the ratio~$F(z)\coloneqq\frac{q(z)}{p(z)}$ of
    functions represented by the series~$p(z)$ and~$q(z)$ has one of
    forms~\eqref{eq:order_alphas_betas} or~\eqref{eq:order_alphas_betas_mer}
    or~$C\cdot(z+\beta_0)$ with~$C,\beta_0\ge 0$.
\end{lemma}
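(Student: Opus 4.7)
The plan is to reduce via Lemma~\ref{lemma:comm_poles} to the case where $p$ and $q$ already have the special product shape~\eqref{eq:pq_form_0}, to extract two Hermite--Biehler-type conditions from Lemma~\ref{lemma:T_via_HH} combined with Theorem~\ref{th:E-AESW}, and then to translate these conditions into the announced form for $F$.

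First, applying Lemma~\ref{lemma:comm_poles}, I would write $p(z)=p_*(z)g(z)$ and $q(z)=q_*(z)g(z)$ with $g$ of the form~\eqref{eq:funct_gen_dtps} and $p_*,q_*$ of the form~\eqref{eq:pq_form_0} sharing a common exponential factor $e^{Az+A_0/z}$. Since $F=q/p=q_*/p_*$ and $H(p_*,q_*)$ remains totally nonnegative, it suffices to analyse $q_*/p_*$; the trivial case $q\equiv 0$ yields $F\equiv 0=0\cdot(z+\beta_0)$ and is covered by the last stated form.

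Next, by Lemma~\ref{lemma:T_via_HH} together with Theorem~\ref{th:E-AESW}, for every $A,B\ge 0$ the Laurent series $Ap_*+Bq_*$ and $Aq_*+Bzp_*$ are themselves of the form~\eqref{eq:funct_gen_dtps}, so each has only nonpositive real zeros; matching dominant asymptotics as $x\to+\infty$ and $x\to 0^+$ (in the spirit of~\eqref{eq:qp_lim_x_inf}--\eqref{eq:qp_lim_x_0}) forces the exponential factor in every such combination to coincide with $e^{Az+A_0/z}$. Equivalently, for every $t>0$ both equations $F(z)=-t$ and $F(z)=-tz$ can hold only at $z\in(-\infty,0]$. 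To extract the product form of $F$, I would track the zeros of $\Phi_t(z)=q_*(z)+tp_*(z)$ continuously in $t$: they stay in $(-\infty,0]$, start at the zero set of $q_*$ when $t=0$, and approach the zero set of $p_*$ as $t\to\infty$, hence must interlace. The parallel analysis applied to $q_*(z)+tzp_*(z)$ (or equivalently to $F(z)/z$) then fixes the direction of the interlacing into the chain inequality of Lemma~\ref{lemma:prop_S1}, and, together with Lemma~\ref{lemma:prop_S}, rules out any extraneous factor $z^{\pm 1}$ in $F$. Combined with~\eqref{eq:pq_form_0}, this puts $F$ into one of the three announced shapes; alternatively, the same information shows that $F$ is an $\mathcal S$-function and one invokes Lemma~\ref{lemma:prop_S1} directly.

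The main obstacle lies in the zero-tracking argument: the zero sets of $p_*,q_*,\Phi_t$ can be infinite and accumulate only at $0$ and $+\infty$, so one must exclude zeros escaping to these essential singularities during the deformation and carefully handle the exceptional values of $t$ at which two zeros momentarily coincide before re-separating on the real axis. Obtaining the precise chain ordering --- and not merely unoriented interlacing --- is what crucially requires \emph{both} Hermite--Biehler-type conditions supplied by Lemma~\ref{lemma:T_via_HH}; the polynomial analogue in Wagner~\cite[Lemma~3.4]{Wa} uses only the first, whereas here the additional input from $T(Aq_*+Bzp_*)$ is needed to distinguish between the forms~\eqref{eq:order_alphas_betas} and~\eqref{eq:order_alphas_betas_mer}, and to exclude wrong powers of $z$ at the origin.
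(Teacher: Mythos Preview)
Your overall plan --- reduce via Lemma~\ref{lemma:comm_poles}, then use Lemma~\ref{lemma:T_via_HH} with Theorem~\ref{th:E-AESW} to conclude that $F(z)\le 0$ and $z^{-1}F(z)\le 0$ each force $z\le 0$, and from this deduce the interlacing and the correct power of $z$ --- matches the paper's architecture. The divergence is in how you extract interlacing and how you pin down the exponent of $z$.

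For interlacing, you propose a continuous zero-tracking of $\Phi_t=q_*+tp_*$, while the paper instead uses a purely local argument: a simple level-set observation (Fact~\ref{fact:A}) shows that at any point $x<0$ with $h(x)\le 0$ one has $h'(x)\ne 0$ for $h=F$ and for $h=z^{-1}F$, which forces all zeros and poles of $F$ to be simple; then Rolle's theorem, applied simultaneously to $F$ and $z^{-1}F$ between two consecutive zeros (resp.\ poles), produces a critical point with negative value for one of the two, contradicting Fact~\ref{fact:A}. This sidesteps exactly the obstacle you flag: no continuity-in-$t$, no control of zeros near the essential singularities at $0$ and $\infty$, and no need to handle collisions. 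Your deformation could presumably be made rigorous via Hurwitz's theorem on compact subintervals of $(-\infty,0)$, but you have not explained why a zero cannot drift off to $0$ or $-\infty$ at some finite $t$, nor why the induced correspondence between zeros of $q_*$ and zeros of $p_*$ is a bijection rather than many-to-one.

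The more serious gap is in your last step. You say the second condition ``fixes the direction of the interlacing'' and that Lemma~\ref{lemma:prop_S} then excludes wrong powers of $z$, but Lemma~\ref{lemma:prop_S} presupposes that $F$ already has the form~\eqref{eq:order_alphas_betas} or~\eqref{eq:order_alphas_betas_mer}, which is precisely what you are proving; and your alternative, to show $F$ is an $\mathcal S$-function and invoke Lemma~\ref{lemma:prop_S1}, still requires knowing $\Im F>0$ in $\mathbb C_+$ rather than $\Im F<0$. The paper handles this concretely and quite differently: it splits into cases (linear-fractional, meromorphic, essential singularity at $0$) and in each case uses the two-sided growth bounds~\eqref{eq:qp_lim_x_inf}--\eqref{eq:qp_lim_x_0} on $F(x)$ and $F(x)/x$ as $x\to 0^+$ and $x\to+\infty$, together with the explicit estimate~\eqref{eq:G_bounded} on the interlacing product, to force the exponent $j-k$ of $z$ in $F$ to be exactly~$0$ and to select the correct chain ordering. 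You will need an argument of that kind; the appeal to Lemma~\ref{lemma:prop_S} as stated is circular.
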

\begin{proof}%[Proof of Lemma~\ref{lemma:H_TNN_F_has_form}]
    Lemma~\ref{lemma:comm_poles} establishes that~$p(z)=p_*(z)g(z)$ and~$q(z)=p_*(z)g(z)$, where
    the series~$p_*(z)$ and~$q_*(z)$ converge to functions of the form~\eqref{eq:pq_form_0}
    and~$g(z)$ can be represented as in~\eqref{eq:funct_gen_dtps}. Furthermore, the
    matrix~$H(p_*,q_*)$ is totally nonnegative. Since the common factor~$g(z)$ does not affect
    the ratio~$F(z)$, we assume that~$p(z)=p_*(z)=\sum_{k=-\infty}^\infty a_kz^k$
    and~$q(z)=q_*(z)=\sum_{k=-\infty}^\infty b_kz^k$ without loss of generality. In particular,
    both series~$p(z)$ and~$q(z)$ converge to functions for all~$z\ne 0$ and we keep the same
    designations for the functions. Now note that the totally nonnegative Toeplitz matrices in
    Lemma~\ref{lemma:T_via_HH} are constructed from the coefficients of the series~$Ap(z)+Bq(z)$
    and~$Aq(z)+Bzp(z)$, and hence the limits of these series have the
    form~\eqref{eq:funct_gen_dtps}. In particular, all zeros of the functions~$Ap(z)+Bq(z)$
    and~$Aq(z)+Bzp(z)$ lie in~$(-\infty,0]$; consequently, they also can be expressed as
    in~$\eqref{eq:pq_form_0}$ due to absence of poles.

    If~$z_0$ is such that~$F(z_0)=\frac{q(z_0)}{p(z_0)}\le 0$, then~$\phi(z_0;-F(z_0))=0$,
    where we put~$\phi(z;A)\coloneqq Ap(z)+q(z)$. Since for each~$A\ge 0$ the
    function~$\phi(z;A)$ does not vanish outside~$(-\infty,0]$, the inequality~$z_0\le 0$ must
    be true. Analogously, if~$z_1$ is such
    that~$\frac{F(z_1)}{z_1}=\frac{q(z_1)}{z_1p(z_1)}\le 0$, then~$\psi(z_1;-F(z_1)/z_1)=0$,
    where~$\psi(z;B)\colonequals Bzp(z)+q(z)$. Since for each~$B\ge 0$ the
    function~$\psi(z;B)$ is nonzero outside~$(-\infty,0]$, we obtain~$z_1\le 0$.

    \begin{fact}[{Details can be found in \emph{e.g.}~\cite[p.~19]{Duren2004}}]\label{fact:A}
        Let~$h(z)$ be a real function holomorphic in a neighbourhood of a real point $x$ and
        such that~$h(z)\le 0$ for a complex~$z$ implies~$z\le 0$. Then the
        expression~$h(z)-h(x)$ has a zero at~$x$ of some multiplicity~$r\ge 1$.
        Therefore,~$h(z)-h(x)\sim (z-x)^{r}$ as~$z$ is close to~$x$ in a small enough
        neighbourhood of~$x$, and we have~$\Im h(z)=0$ on the union of~$r$ arcs meeting in this
        neighbourhood only at~$x$; one of these arcs is a subinterval of the real line due to
        the reality of~$h(z)$. Furthermore, the half of (if~$r$ is even) or all (if~$r$ is odd)
        the arcs contain an interval where~$h(z)\le h(x)$.
        \emph{In particular, the condition~$h(x)=0$ implies~$r\le 2$, and the condition~$h(x)<0$
            implies~$r=1$.}
    \end{fact}
    Fact~\ref{fact:A} with~$h(z)\coloneqq F(z)$ implies that~$F(z)$ can have at most double
    zeros. It is possible that the function~$F(z)$ is holomorphic at the origin and equal to
    zero there. The assumption that the point~$x=0$ can be a double zero of~$F(z)$ is
    contradictory: Fact~\ref{fact:A} implies that~$F(z)$ is negative for all real~$z\ne0$ small
    enough, which is impossible for~$z>0$. Suppose that~$x<0$ is a double zero of~$F(z)$, that
    is~$F(x)=F'(x)=0\ne F''(x)$. Then~$F(z)<0$ and, therefore,~$z^{-1}F(z)>0$ for all real~$z$
    in a sufficiently small punctured neighbourhood of~$x$. At the point~$x$, the
    function~$z^{-1}F(z)$ has a double zero:
    \[
    \frac{F(x)}x=\frac{F(x)-xF'(x)}{x^2}=0\ne \frac{2x(F(x)-xF'(x))-x^{3}F''(x)}{x^{4}}.
    \]
    Putting~$h(z)\coloneqq z^{-1}F(z)$ in Fact~\ref{fact:A} then yields a contradiction, since
    the inequality~$z^{-1}F(z)\le0$ must be satisfied for all real~$z$ which are close enough
    to~$x$. Consequently, the only possible case is~$r=1$, that is that all zeros of~$F(z)$ are
    simple. Considering in the same way~$h(z)=\frac{z}{F(z)}$ and~$h(z)=\frac{1}{F(z)}$ shows
    that all poles of~$\frac{F(z)}{z}$ are simple. In particular,~$F(z)$ cannot have a pole at
    the origin.
    
    Now, let us prove that zeros and poles of~$F(z)$ are interlacing. Suppose
    that~$x_1<x_2\le 0$ are two consecutive zeros of the function~$F(z)$, such that the
    interval~$(x_1,x_2)$ contains no poles of~$F(z)$. The ratio~$z^{-1}F(z)$ also vanishes
    at~$x_1$ and~$x_2$ unless~$x_2=0$; therefore, Rolle's theorem gives the
    points~$\xi_1,\xi_2\in(x_1,x_2)$ such
    that~$F'(\xi_1)=\xi_2^{-2}(F(\xi_2)-\xi_2F'(\xi_2))=0$. Let~$h(z)\coloneqq F(z)$
    and~$x\coloneqq\xi_1$ if~$F(\xi_1)<0$, or~$h(z)\coloneqq z^{-1}F(z)$ and~$x\coloneqq\xi_2$
    if~$F(\xi_1)>0$. In the special case~$x_2=0$, the function~$F(z)$ is negative
    in~$(x_1,x_2)$, so we put~$h(z)\coloneqq F(z)$ and denote a zero of~$h'(z)$ in this interval
    by~$x$. Fact~\ref{fact:A} implies~$h'(z)\ne 0$ in the whole interval~$x_1<z<x_2\le 0$
    including~$z=x$, which contradicts to our choice of~$x$. This shows that the
    function~$F(z)$ has at least one pole between each pair of its zeros. The same argumentation
    for~$\frac {z}{F(z)}$ instead of~$F(z)$ yields that~$F(z)$ has a zero between each pair of
    its poles. As a result, zeros and poles of~$F(z)$ are interlacing.

    Recall that the functions~$p(z)$ and~$q(z)$ can be represented as in~\eqref{eq:pq_form_0};
    we can therefore gather all their common zeros and their common exponential
    term~$e^{Az+\frac{A_0}z}$ into a
    function~$g_*(z)=\sum_{n=-\infty}^{\infty}g_nz^n\not\equiv 0$ of the
    form~\eqref{eq:pq_form_0}, such that zeros of~$\frac{q(z)}{g_*(z)}$ coincide with zeros
    of~$F(z)$ and zeros of~$\frac{p(z)}{g_*(z)}$ coincide with poles of~$F(z)$. In the
    case~$q(z)\equiv 0$, the lemma is trivial.
    If~$(\zeta_1z+\zeta_2)p(z)=(\eta_1z+\eta_2)q(z)\not\equiv 0$ with some
    coefficients~$\zeta_1,\zeta_2,\eta_1,\eta_2\ge 0$, then
    \[
    0\le
    \begin{vmatrix}
        a_n& a_{n+1}\\
        b_n& b_{n+1}
    \end{vmatrix}
    =
    \det\left(
    \begin{pmatrix}
        \zeta_2& \zeta_1\\
        \eta_2& \eta_1
    \end{pmatrix}
    \cdot
    \begin{pmatrix}
        g_n& g_{n+1}\\
        g_{n-1}& g_{n}
    \end{pmatrix}\right)
    =
    \begin{vmatrix}
        \zeta_2& \zeta_1\\
        \eta_2& \eta_1
    \end{vmatrix}
    \cdot
    \begin{vmatrix}
        g_n& g_{n+1}\\
        g_{n-1}& g_{n}
    \end{vmatrix}.
    \]
    Since the Laurent series of~$g_*(z)$ converges is any annulus centered at the origin, there
    exists an integer~$n$ such that~$g_n^2>g_{n-1}g_{n+1}$ by
    Lemma~\ref{lemma:ann_tnn_zero_minor}. Therefore, the coefficients of the
    function~$F(z)=\frac{\eta_1z+\eta_2}{\zeta_1z+\zeta_2}$
    satisfy~$\zeta_2\eta_1\ge \eta_2\zeta_1$. In the case~$\zeta_2=0$, we necessarily
    have~$\eta_2=0$ due to~$q(z)\not\equiv 0$; if~$\zeta_2\ne 0$
    then~$\alpha_0=\frac{\zeta_2}{\zeta_1}\ge\frac{\eta_2}{\eta_1}=\beta_0\ge 0$, which is proving
    the lemma.

    %\setstretch{1.15}%
    Let the function~$F(z)$ be meromorphic and not a constant, and
    let~$C_2(z+\beta_0)p(z)\not\equiv C_1(z+\alpha_0)q(z)$. Since zeros and poles of~$F(z)$ are
    interlacing, this implies that both functions~$\frac{q(z)}{g_*(z)}$
    and~$\frac{p(z)}{g_*(z)}$ have zeros:
    \[
    \frac{p(z)}{g_*(z)}=C_1(z+\alpha_0)\prod_{\nu>0} \left(1+\dfrac{z}{\alpha_\nu}\right)
    \an
    \frac{q(z)}{g_*(z)}=C_2(z+\beta_0)\prod_{\nu>0} \left(1+\dfrac{z}{\beta_\mu}\right)
    \]
    where~$0\le\beta_{0}<\beta_{1}<\cdots$ and~$0<\alpha_{0}<\alpha_{1}<\cdots$. For proving
    that~$F(z)$ has the form~\eqref{eq:order_alphas_betas_mer} it is enough to show that the
    chain inequality~$0<\alpha_0<\beta_0<\alpha_1<\beta_1<\dots$ fails to hold. Let this
    inequality hold, then~$p(z)$ has at least two negative zeros, and thus at least three
    nonzero coefficients of~$p(z)$. By Lemma~\ref{lemma:ann_tnn_zeros}, there are at least three
    consecutive nonzero coefficients, say~$a_{i-1},a_{i},a_{i+1}$. Estimating the terms
    according to
    \[
    \frac{1+\frac{x}{\beta_\nu}}{1+\frac{x}{\alpha_\nu}}<1
    \quad\text{for all}\quad x>0,\quad \nu=1,2,\dots
    \]
    in the ratio~$\frac{q(x)\cdot g_*(x)}{g_*(x)\cdot p(x)}$ yields the contradiction
    \[
    \varliminf_{x\to+\infty}F(x)
    \le\frac{C_2}{C_1}\cdot\lim_{x\to+\infty}\frac{x+\beta_0}{x+\alpha_0}
    =\frac{C_2}{C_1}
    <\frac{C_2\beta_0}{C_1\alpha_0}=F(0)\le \frac{b_i}{a_i}\le
    \varliminf_{x\to+\infty}F(x),
    \]
    where the last two inequalities are the first inequality in~\eqref{eq:qp_lim_x_0} and the
    last inequality in~\eqref{eq:qp_lim_x_inf}. Consequently,~$\beta_0<\alpha_0$ and~$F(z)$ has
    the form~\eqref{eq:order_alphas_betas_mer}.

    Now let us consider the remaining case when the function~$F(z)$ has an essential singularity
    at the origin, that is when the products over~$\nu<0$ in the
    representations~\eqref{eq:pq_form_0} of~$p(z)$ and~$q(z)$ have an infinite number of
    distinct terms. Since the distinct zeros of~$p(z)$ and~$q(z)$ are interlacing, we can
    enumerate entries in~$(\alpha_\nu)_{\nu\ne 0}$ and~$(\beta_\nu)_{\nu\ne 0}$ so that the
    inequality~\eqref{eq:order_alphas_betas} is satisfied:
    \[
    \cdots<\alpha_{-2}^{-1}<\beta_{-1}^{-1}<\alpha_{-1}^{-1}
    <\beta_{1}<\alpha_{1}<\beta_{2}<\alpha_2<\cdots,
    \]
    and therefore for all~$x>0$
    \[
    \frac{\alpha_{-1}x}{1+\alpha_{-1}x}
    =\frac{1}{1+\frac{x^{-1}}{\alpha_{-1}}}
    <
    \prod_{\nu=-\infty}^{-1}\frac{1+\frac{x^{-1}}{\beta_\nu}}{
        1+\frac{x^{-1}}{\alpha_\nu}}<1    
    \an
    1<\prod_{\nu>0}\frac{ 1+\frac{x}{\beta_\nu}}{1+\frac{x}{\alpha_\nu}}
    <1+\frac{x}{\beta_1}.
    \]
    Gathering these estimates together yields
    \begin{equation}\label{eq:G_bounded}
        \frac{\alpha_{-1}x}{1+\alpha_{-1}x}
        <G(x)\coloneqq
        \prod_{\nu=-\infty}^{-1}\frac{1+\frac{x^{-1}}{\beta_\nu}}{
            1+\frac{x^{-1}}{\alpha_\nu}}\cdot
        \prod_{\nu>0}\frac{ 1+\frac{x}{\beta_\nu}}{1+\frac{x}{\alpha_\nu}}
        <1+\frac{x}{\beta_1};
    \end{equation}
    in other words, we put~$G(x)=\frac{C_1}{C_2}x^{j-k}F(x)$. Take~$i$ is such
    that~$a_{i-2},a_{i-1},a_{i},a_{i+1}\ne 0$, then~$b_{i-1},b_{i},b_{i+1}\ne 0$ by
    Lemma~\ref{lemma:ann_convergence_term}. The series~$p(z)$ and~$q(z)$ converge to functions
    holomorphic in~$\mathbb{C}\setminus\{0\}$; if~$b_{i}a_{i-1}=b_{i-1}a_{i}$, then
    Lemma~\ref{lemma:H_tnn_zero_minor}~\eqref{item:Hurwitz_minor_c}
    gives~$q(z)=\frac{b_i}{a_i}p(z)$ contradicting to the presence of distinct zeros of~$p(z)$
    and~$q(z)$. Analogously, if~$b_{i}a_{i}= b_{i+1}a_{i-1}$, then Lemma~\ref{lemma:H_zpq_order}
    and Lemma~\ref{lemma:H_tnn_zero_minor}~\eqref{item:Hurwitz_minor_c} give the
    contradiction~$zp(z)=\frac{a_{i-1}}{b_{i}}q(z)$. As a result, the
    inequalities~\eqref{eq:qp_lim_x_inf} and~\eqref{eq:qp_lim_x_0} imply
    \begin{equation}\label{eq:F_Fx_bounded}
        \varlimsup_{x\to0+}F(x) \le \frac{b_{i-1}}{a_{i-1}}
          < \frac{b_{i}}{a_{i}} \le \varliminf_{x\to+\infty}F(x)
        \an
        \varliminf_{x\to0+}\frac{F(x)}{x} \ge \frac{b_{i}}{a_{i-1}}
          > \frac{b_{i+1}}{a_i} \ge \varlimsup_{x\to+\infty}\frac{F(x)}{x},
    \end{equation}
    According to the latter inequality in~\eqref{eq:F_Fx_bounded},
    \[
    \frac{C_2}{C_1}\varliminf_{x\to0+}G(x)
    =\varliminf_{x\to0+}x^{j-k}F(x)
     >\varlimsup_{x\to+\infty}x^{j-k}F(x)=\frac{C_2}{C_1}\varlimsup_{x\to+\infty}G(x)
    \]
    \setstretch{1.2}%
    when~$j-k\le -1$ which contradicts to~\eqref{eq:G_bounded}. When~$j-k\ge 1$, the former
    inequality in~\eqref{eq:F_Fx_bounded} says that
    \[
    \frac{C_2}{C_1}\varlimsup_{x\to0+}\frac{G(x)}{x}=
    \varlimsup_{x\to0+}x^{j-k-1}F(x)< \varliminf_{x\to+\infty}x^{j-k-1}F(x)
    =\frac{C_2}{C_1}\varliminf_{x\to+\infty}\frac{G(x)}{x},
    \]
    which is inconsistent (due to~$\alpha_{-1}>\beta_1^{-1}$) with the estimate
    \[
    \alpha_{-1}-\frac{\alpha_{-1}^2x}{1+\alpha_{-1}x}
    < \frac{G(x)}{x}
    <\frac{1}{\beta_1} + \frac 1x,
    \ww x>0,
    \]
    equivalent to~\eqref{eq:G_bounded}. As a result, the only possible case is~$j=k$, that
    is~$F(z)$ coincides with~$G(z)$ up to a positive constant and can be expressed as
    in~\eqref{eq:order_alphas_betas}.
\end{proof}
\begin{lemma}\label{lemma:converse}
    If functions~$p(z)$ and~$q(z)$ have the form~\eqref{eq:funct_gen_dtps} and their
    ratio~$F(z)=\frac{q(z)}{p(z)}$ can be represented as in~\eqref{eq:order_alphas_betas}
    or~\eqref{eq:order_alphas_betas_mer}, then the matrix~$H(p,q)$ is totally nonnegative.
\end{lemma}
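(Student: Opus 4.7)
The plan is to reduce, in three stages, to a finite polynomial case already established in~\cite{HoltzTyaglov,Dyachenko14}.

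First, I invoke Lemma~\ref{lemma:prop_S1} to extract a common factor: one writes $p(z)=p_\flat(z)g(z)$ and $q(z)=q_\flat(z)g(z)$, where the factor $g(z)$ collects the common exponential $e^{Az+A_0/z}$, the common power $z^j$, and every zero shared by $p$ and $q$, so that $g$ is of the form~\eqref{eq:funct_gen_dtps}, while $p_\flat(z)$ and $q_\flat(z)$ are the normalised products appearing in Lemma~\ref{lemma:prop_S1} whose zeros strictly interlace. A direct computation with shifts and convolutions yields the factorisation
\[
H(p,q)=H(p_\flat,q_\flat)\cdot T(g);
\]
the $(2i{-}1)$-th (resp.~$2i$-th) row of $H(p_\flat,q_\flat)$ is a shifted copy of the coefficients of $p_\flat$ (resp.~$q_\flat$), and right-multiplication by $T(g)$ performs convolution with $g$. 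By Edrei's Theorem~\ref{th:E-AESW} the matrix $T(g)$ is totally nonnegative; the Cauchy--Binet identity, applied to any finite minor of the product, expresses it as a sum of products of nonnegative minors of $H(p_\flat,q_\flat)$ and $T(g)$, hence nonnegative, provided these two factors are themselves totally nonnegative. It therefore suffices to show total nonnegativity of $H(p_\flat,q_\flat)$.

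Second, I would approximate by finite truncations. Putting
\[
p_n(z)\coloneqq a_0\!\!\prod_{0<\nu\le n}\!\!\Big(1+\tfrac{z}{\alpha_\nu}\Big)\!\!\prod_{-n\le\nu<0}\!\!\Big(1+\tfrac{z^{-1}}{\alpha_\nu}\Big),\qquad q_n(z)\coloneqq b_0\!\!\prod_{0<\mu\le n}\!\!\Big(1+\tfrac{z}{\beta_\mu}\Big)\!\!\prod_{-n\le\mu<0}\!\!\Big(1+\tfrac{z^{-1}}{\beta_\mu}\Big),
\]
Theorem~\ref{th:canonical_product} gives locally uniform convergence, and hence coefficient-wise convergence $p_n\to p_\flat$ and $q_n\to q_\flat$. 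Since total nonnegativity is preserved under entry-wise limits of matrices, it is enough to verify that $H(p_n,q_n)$ is totally nonnegative for every $n$.

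Third, for fixed $n$ the series $p_n$ and $q_n$ are Laurent polynomials; multiplication by a sufficiently high power $z^M$ turns them into ordinary polynomials with strictly negative real zeros that interlace in precisely the order required for the polynomial Hurwitz-type criterion of~\cite{HoltzTyaglov,Dyachenko14} to apply. By Lemma~\ref{lemma:H_zpq_order}, iterated multiplication by $z$ merely shifts the indexation and preserves total nonnegativity, so the polynomial conclusion transfers back to $H(p_n,q_n)$. The main obstacle is the Cauchy--Binet bookkeeping in the first stage: when $g$ is not a Laurent polynomial, each entry of the product $H(p_\flat,q_\flat)\cdot T(g)$ is a genuinely infinite sum, so one must argue that the Cauchy--Binet expansion of a finite minor converges. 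A clean workaround is to additionally truncate the canonical product representing $g$ (so that $g$ becomes a Laurent polynomial and Cauchy--Binet is elementary), and then pass to the entry-wise limit as in Stage~2; the remaining details are routine.
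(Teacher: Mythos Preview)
Your proposal is correct and follows essentially the same approach as the paper: factor $H(p,q)=H(p_*,q_*)\cdot T(g)$ with $T(g)$ totally nonnegative by Theorem~\ref{th:E-AESW}, approximate $(p_*,q_*)$ by the finite Laurent-polynomial truncations $(p_n,q_n)$ of~\eqref{eq:F_is_limit}, invoke the polynomial criterion from~\cite{HoltzTyaglov,Dyachenko14} for each $n$, and pass to the entry-wise limit. The paper cites that criterion directly for the Laurent polynomials $p_n,q_n$ (without your intermediate shift by $z^M$, which is harmless since it only relabels indices) and does not explicitly address the Cauchy--Binet convergence issue you flag; your handling of these two points is slightly more careful but the argument is otherwise identical.
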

\begin{proof}
    Indeed, denote by~$p_*(z)\coloneqq \frac{p(z)}{g(z)}$
    and~$q_*(z)\coloneqq \frac{q(z)}{g(z)}$ the denominator and numerator of the function~$F(z)$
    given in~\eqref{eq:order_alphas_betas}. This means that~$p_*(z)\not\equiv 0$ and~$q_*(z)$
    have no common zeros, no poles and no exponential factors; the function~$g(z)$ has the
    form~\eqref{eq:funct_gen_dtps}. The function~$C\frac{q_n(z)}{p_n(z)}=F_n(z)$ introduced
    in~\eqref{eq:F_is_limit} maps the upper half-plane into itself for each positive
    integer~$n$. According to Theorem~3.44 of~\cite{HoltzTyaglov} (see also Theorem~1.4
    of~\cite{Dyachenko14} where the notation is closer to the current paper) the
    matrix~$H(p_n,q_n)$ is totally nonnegative. Since~$p_n(z)$ and~$q_n(z)$ converge
    in~$\mathbb C\setminus\{0\}$ locally uniformly to~$p_*(z)$ and~$q_*(z)$ respectively, their
    Laurent coefficients converge as well. Therefore, the matrix~$H(p_*,q_*)$ is totally
    nonnegative as an entry-wise limit of totally nonnegative matrices. Then the Cauchy-Binet
    formula implies the total nonnegativity of the
    matrix~$H(p,q)=H(p_*\,g,q_*\,g)=H(p_*,q_*)\cdot T(g)$, because~$T(g)$ is totally nonnegative
    by Theorem~\ref{th:E-AESW}.
\end{proof}
\begin{proof}[Proof of Theorem~\ref{th:main1}]
    Lemma~\ref{lemma:prop_S1} shows that the
    implications~\eqref{item:m3}$\implies$\eqref{item:m1}
    and~\eqref{item:m1}$\implies$\eqref{item:m3} follow, respectively, from
    Lemma~\ref{lemma:H_TNN_F_has_form} and Lemma~\ref{lemma:converse}.
\end{proof}
\addsec{Acknowledgements}
The author is grateful to Olga Holtz for two questions which gave rise to the present
publication.
%\vfill \eject

%\cohead{References}
{\setstretch{1}

}
\end{document}